\documentclass[12pt]{amsart}

\usepackage{amsmath}
\usepackage{enumitem}
\usepackage{amsthm}
\usepackage{amsfonts}
\usepackage{amssymb}
\usepackage{mathrsfs}
\usepackage{hyperref}
\usepackage[dvipsnames]{xcolor}
\usepackage[all]{hypcap}[2006/02/20]
\usepackage{fullpage}
\usepackage{esint}
\usepackage{graphicx}
\usepackage{enumitem}
\usepackage{tikz}

\DeclareMathAlphabet{\mathpzc}{OT1}{pzc}{m}{it}

\newcommand{\bb}[1]{\mathbb{#1}}

\newcommand{\pard}[2]{\frac{\partial #1}{\partial #2}}

\newcommand{\ho}{\left(\frac{d}{dt} -\Delta \right)}
\newcommand{\ddt}[1]{\frac{ d #1}{dt}}
\newcommand{\ip}[2]{\left \langle #1 , #2 \right\rangle}
\newcommand{\g}{\gamma}

\newcommand{\n}{\nabla}
\newcommand{\e}{\epsilon}

\renewcommand{\l}{\lambda}
\renewcommand{\d}{\delta}

\newcommand{\ra}{\rightarrow}
\newcommand{\ov}{\overline}
\newcommand{\un}[1]{{\underline{#1}}}

\newcommand{\T}{\Theta}

\newcommand{\on}[1]{{\operatorname{#1}}}



\begin{document}
\theoremstyle{plain}
\newtheorem{theorem}{Theorem}
\newtheorem{lemma}[theorem]{Lemma}
\newtheorem{claim}[theorem]{Claim}
\newtheorem{proposition}[theorem]{Proposition}
\newtheorem{cor}[theorem]{Corollary}
\theoremstyle{definition}
\newtheorem{defses}{Definition}
\newtheorem{example}{Example}
\newtheorem{assumption}{Assumption}
\theoremstyle{remark}
\newtheorem{remark}{Remark}

\title{A note on Alexandrov immersed mean curvature flow}
\author{Ben Lambert}
\email{b.s.lambert@leeds.ac.uk}
\address{ School of Mathematics, University of Leeds, Leeds, LS2 9JT, United Kingdom}
\author{Elena M\"ader-Baumdicker}
\email{maeder-baumdicker@mathematik.tu-darmstadt.de}
\address{Fachbereich Mathematik, Schlossgartenstr. 7, 64289 Darmstadt, Germany}
\begin{abstract}
We demonstrate that the property of being Alexandrov immersed is preserved along mean curvature flow. Furthermore, we demonstrate that mean curvature flow techniques for mean convex embedded flows such as noncollapsing and gradient estimates also hold in this setting. We also indicate the necessary modifications to the work of Brendle--Huisken to allow for mean curvature flow with surgery for the Alexandrov immersed, $2$-dimensional setting.
\end{abstract}
\maketitle

\section{Introduction}

One of the major breakthroughs in mean convex mean curvature flow is the notion of $\alpha$-noncollapsedness, which may be viewed as a ``quantitative embeddedness'' and has played a vital role many of the recent advances in Mean Curvature Flow (MCF). Originating in the work of White \cite{White} and Sheng and Wang \cite{ShengWang}, an elegant and influential proof of $\alpha$-noncollapsedness was given by Andrews \cite{AndrewsNoncollapsing}. This idea has led to numerous important results in the singularity theory of noncollapsed MCF, key amongst them are Haslhofer and Kleiner's gradient estimates \cite{HaslhoferKleiner} and Brendle's improved estimates on the inscribed radius \cite{BrendleInscribedRadius}. These developments contributed to Brendle and Huisken's notion of surgery for embedded mean convex MCF of surfaces \cite{BrendleHuisken}, extending the surgery results of Huisken--Sinestrari \cite{HuiskenSinestrariSurgery} (which only hold for dimensions $n\geq 3$). A second approach to mean curvature flow with surgery was described by Haslhofer and Kleiner \cite{HaslhoferKleinerSurgery} which also makes extensive use of noncollapsing quantities. The property of being $\alpha$-noncollapsed continues to play a vital role in many of the recent mean curvature flow developments, see for example \cite{AngenentDaskalopoulosSesum}\cite{BrendleChoi}\cite{ChoiHaslhoferHershkovits}\cite{ChoiHaslhoferHershkovitsWhite}.   

In this paper we demonstrate that in several of the above results we may weaken the initial embeddedness requirement to the property of being Alexandrov immersed. Indeed, we show that this property is preserved by the flow, we demonstrate a notion of $\alpha$-noncollapsedness for Alexandrov immersions, and we provide applications. 
We note that $\alpha$-noncollapsing methods have previously been applied to Alexandrov immersed surfaces in the case of minimal surfaces by Brendle \cite{BrendleAlex}, in an extension of his proof of the Lawson conjecture \cite{BrendleLawson}. We would also like to mention that Brendle and Naff found a way to do a localized version of noncollapsedness and get that certain ancient solutions of the MCF are noncollapsed, see \cite{BrendleNaff}.

We recall the definition of an Alexandrov immersed hypersurface. This property goes back to Alexandrov \cite{Alexandrov} in his work about closed surfaces of constant mean curvature in Euclidean space\footnote{Note that the definition of being Alexandrov immersed in \cite{Alexandrov} does not carry this name, of course. Also, the property of $G$ being an immersion is missing in this paper (there, only the expression \emph{smooth mapping} is used). But it is meant to be part of the definition as it is used in the proof.}.
\begin{defses}\label{def:AlexImmersed}
An immersion $X:M^n\ra \bb{R}^{n+1}$ is Alexandrov immersed if there exists an $(n+1)$-dimensional manifold $\ov{\Omega}$ with $\partial \ov{\Omega} = M^n$ and an immersion $G:\ov{\Omega} \ra \bb{R}^{n+1}$ such that $G|_{\partial \ov{\Omega}}$ also parametrises $\on{Im}(X)$. 
\end{defses}

In this paper we first prove:
\begin{center}
\emph{The property of being Alexandrov immersed is preserved under the flow, and there is a suitable notion of comparison solutions.}
\end{center}
For full statements, see Proposition \ref{prop:PresAlexImmersed} and Lemma \ref{lem:Comparison}. Next we demonstrate: 
\begin{center}
\emph{For mean convex initial data, there exits a natural notion of $\alpha$-noncollapsed  Alexandrov immersed mean curvature flow. Noncollapsedness is preserved under the flow.}
\end{center}
See Theorem \ref{thm:noncollapsing}, and also Propositions \ref{prop:innerviscosity} and \ref{prop:outerviscosity} for viscosity sub/super-solution equations for noncollapsing quantities. In particular, the above noncollapsing quantities still rule out ``grim reaper cross $\bb{R}$'' singularity models, which is one of the major stumbling blocks for immersed mean curvature flow with surgery in dimension $n=2$. Finally we verify:
\begin{center}
\emph{Mean convex Alexandrov immersed MCF satisfies gradient estimates and Brendle and Huisken's surgery may be extended to this setting}
\end{center}
These final results are extensions of existing results requiring only minor modifications, but they confirm the applicability of the Alexandrov immersed property as a natural extension to embedded property for MCF. We note that surgery in dimensions $n\geq 3$ for immersed surfaces is already known by the work of Huisken--Sinestrari \cite{HuiskenSinestrariSurgery}. We verify the above claims in Theorems \ref{thm:gradest}, \ref{thm:inscrradest} and \ref{thm:surgery}.\\[0.1mm]

\section*{Acknowledgments}

Both authors would like to thank Karsten Gro{\ss}e-Brauckmann for useful insights. The second author is supported by the DFG (MA 7559/1-2) and thanks the DFG for the support.

\section{Notation}
Let $M^n$ be an $n$-dimensional, closed manifold. Throughout this paper we will consider a smooth one-parameter family of immersions $X:M^n\times[0,T)\ra \bb{R}^{n+1}$ which satisfies mean curvature flow, that is,
\[\ddt{X} = -H\nu\ ,\]
 where we use the convention that $h_{ij} = - \langle \partial_i\partial_jX,\nu\rangle$ are the local coefficients of the second fundamental form $A$, and $H = \sum g^{ij}h_{ij}$ is the mean curvature. We will write the principal curvatures as 
 \[\kappa_{\on{min}}=\kappa_1 \leq \kappa_2 \leq \ldots \leq \kappa_n=\kappa_{\on{max}} \ .\]
At any given point we will take $e_1, \ldots , e_n$ to be an orthonormal basis of principal directions corresponding to the $\kappa_i$. Unless otherwise stated, we will assume that on the considered time interval 
\begin{equation}
|A|^2 \leq C_{A,0}\ ,\label{eq:Abound}
\end{equation}
which implies by standard methods (see e.g. \cite[Proposition 3.22]{EckerBook}) that 
\begin{equation}
|\n^p A|^2 \leq C_{A,p}\ .\label{eq:nAbound}
\end{equation}
At no point will the values of $C_{A,0}$ and $C_{A,p}$ be important, and so all theorems will carry through arbitrarily close to singularities. We will follow the usual mean curvature flow definitions, writing $M_t=\on{Im}(X(\cdot, t))$. 

We suppose that $\ov{\Omega}$ is a compact manifold with boundary such that $\partial \Omega$ is diffeomorphic to $M^n$. We take $M_0$ to be Alexandrov immersed with immersion $G_0:\ov\Omega\ra \bb{R}^{n+1}$ and choose $\nu(x,t)$ to be the unit vector at $X(x,t)$ which is continuous in space and time for which the pullback $G^*_0(\nu(x,0))$ points out of $\ov{\Omega}$. 

Along the flow, for some $\ov{T}\leq T$, we consider 
\[G:\ov{\Omega}\times[0,\ov{T})\ra \bb{R}^{n+1}\] 
to be a one parameter family of immersions so that for each $t\in[0, \ov{T})$, $X(\cdot,t)$ and $G(\cdot,t)$ satisfy Definition~\ref{def:AlexImmersed}. We assume that $G$ is smooth in space, but at this point assume no regularity or uniqueness in time (although later we prove that we may take $\ov{T}=T$ and $G$ may be locally chosen to be smooth). Different choices of $G$ are related by a time dependent diffeomorphism of $\ov\Omega$.

Throughout we will assume that $M_0$ is Alexandrov immersed at $t=0$, so such a $G$ exists at least for $t=0$.
\section{Preservation of being Alexandrov immersed}

For any given choice of $G$ as above, we define $\ov{g}_{\alpha\beta}(x,t)$ to be the pullback of the Euclidean metric on $\bb{R}^{n+1}$ to $\ov{\Omega}$ along $G(\cdot, t)$.  For $x\in \partial \Omega$ we define $\gamma_t(x,s)$ to be the unit speed geodesic with respect to $\ov{g}$ starting at $x$ and going into $\ov{\Omega}$ (for $s>0$), with starting direction $\ov{g}$ orthogonally to $\partial \Omega$ (i.e. the geodesic with initial velocity $G^*_t(-\nu)$).
Using this we define the injectivity radius to be
\[\on{inj} (t):=\sup\{\lambda>0 \ | \ \g_t: \partial \ov{\Omega} \times [0,\l) \text{ is injective}\}\ .\]
Note that the injectivity radius does not depend on the choice of $G$ as geodesics of a metric do not depend on the parametrisation.

It will be useful to have canonical charts of an Alexandrov immersion initially, and the following describes one way of constructing them.

\begin{lemma}[Canonical charts] Suppose that $M_0$ is compact and Alexandrov immersed by $G:\ov{\Omega}\ra \bb{R}^{n+1}$ such that the curvature bound \eqref{eq:Abound} holds. Then we may construct a finite atlas of canonical charts $\phi_i:W_i \ra S_i$ of $\ov{\Omega}$ where $W_i\subset \ov\Omega$ and $S_i\subset\bb{R}^{n+1}$ are simply connected open sets such that\label{lem:canonicalconstruction} on each subset $S_i$, $D\widetilde{G}=\on{Id}$ and the pullback metric is the Euclidean metric $\delta_{\alpha\beta}$.
\end{lemma}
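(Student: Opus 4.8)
The plan is to use the immersion $G$ itself as the chart map. Since $G$ maps the $(n+1)$-dimensional manifold $\ov\Omega$ into $\bb{R}^{n+1}$ and is an immersion, at every point $DG$ is an injective linear map between vector spaces of equal dimension, hence a linear isomorphism; by the inverse function theorem $G$ is therefore a local diffeomorphism, and any sufficiently small restriction of $G$ is a canonical chart essentially by definition: if $\phi := G|_{W}\colon W \to S:=G(W)$ is a diffeomorphism onto an open set, then the representative $\widetilde G := G\circ\phi^{-1}$ equals $\on{id}_S$, so $D\widetilde G = \on{Id}$ and the pullback of the Euclidean metric under $\widetilde G$ — which is exactly the metric $\ov g$ expressed in the chart — is $\delta_{\alpha\beta}$.

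Carrying this out, for a point $p$ in the interior of $\ov\Omega$ the inverse function theorem directly produces an open ball $W_p\ni p$ on which $G$ is a diffeomorphism onto an open set $S_p := G(W_p)\subset\bb{R}^{n+1}$; shrinking $W_p$ we may take $W_p$, and hence $S_p$, simply connected, and then $\phi_p := G|_{W_p}$ has the required properties by the previous paragraph. The one point needing care is the boundary: near $p\in\partial\Omega$ the map $G$ is only an immersion of a manifold with boundary, so a chart around $p$ a priori maps into a half-space rather than an open subset of $\bb{R}^{n+1}$. To remedy this I would first extend. Since $G^*(\nu)$ is transverse to $\partial\Omega$ by construction, one may take a collar neighbourhood of $\partial\Omega$ and extend $G$ across $\partial\Omega$ to an immersion $\widehat G$ of an open manifold $\widehat\Omega\supset\ov\Omega$ — concretely, reflect or Seeley--extend $G$ in a boundary chart and, using that nondegeneracy of $D\widehat G$ is an open condition and already holds on $\partial\Omega$, shrink the collar so that $\widehat G$ stays an immersion. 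The curvature bound \eqref{eq:Abound}, together with \eqref{eq:nAbound}, is what allows this extension to be performed on a collar, and the resulting charts near $\partial\Omega$ to be produced, of a size depending only on $C_{A,0}$; this uniformity is not strictly needed for the statement but is convenient when the construction is later applied along the flow. Applying the interior argument to $\widehat G$ then gives, for each boundary point $p$, a simply connected chart $\phi_p := \widehat G|_{W_p}\colon W_p\to S_p$ with $S_p\subset\bb{R}^{n+1}$ genuinely open and $W_p$ an open neighbourhood of $p$ in $\widehat\Omega$ meeting $\ov\Omega$.

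Finally, $\{W_p\}_{p\in\ov\Omega}$ is an open cover of the compact set $\ov\Omega$, so finitely many $W_1,\dots,W_N$ already cover $\ov\Omega$, and the corresponding $\phi_i\colon W_i\to S_i$ form the desired finite atlas of canonical charts. I expect the extension across $\partial\Omega$ — guaranteeing that the extended map remains an immersion so that boundary points also receive honest open charts, with size controlled by the curvature bound — to be the only step with any content; the interior charts, the verification of the canonical properties, and the passage to a finite subatlas are all immediate.
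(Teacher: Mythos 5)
Your proposal is correct for the statement as written, but it proves it by a genuinely softer route than the paper. You rely only on the inverse function theorem at interior points, an extension of $G$ across a collar of $\partial\ov\Omega$ to an immersion of a slightly larger open manifold to handle boundary points, and compactness to extract a finite subatlas; in particular the curvature hypothesis \eqref{eq:Abound} plays no real role in your argument (and your side remark that \eqref{eq:Abound}--\eqref{eq:nAbound} would give charts of size controlled by $C_{A,0}$ is not actually delivered by the IFT-plus-extension construction --- a uniform scale needs a quantitative injectivity argument). The paper instead works with intrinsic geodesic balls $B_r(x)$ of the \emph{fixed} radius $r<(10C_{A,0})^{-1}$ and shows $G|_{B_r(x)}$ is injective: if $G(p)=G(q)$, the distance-realizing curve between $p$ and $q$ consists of geodesic segments (whose images are straight lines, since $G$ is a local isometry onto Euclidean space) and arcs in $\partial\Omega$ with curvature bounded via $|A|$, and a short curve with such a curvature bound cannot close up in $\bb{R}^{n+1}$; a finite subcover of these balls then gives the atlas. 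What the paper's proof buys is precisely the definite chart scale $\sim C_{A,0}^{-1}$, which is why the curvature bound appears in the hypotheses and which is convenient in the subsequent tubular-neighbourhood construction; what your proof buys is economy (no quantitative input) and a cleaner treatment of the boundary, where your extension makes the images $S_i$ honestly open, whereas the paper's $S_i=G(B_r(x_i))$ are only relatively open when the ball meets $\partial\ov\Omega$. Since the lemma's conclusion asks only for some finite atlas of canonical charts, your argument suffices, but be aware that it forgoes the quantitative content the paper's proof was designed to provide.
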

\begin{proof}
We construct an explicit manifold $\widetilde{\Omega}$ diffeomorphic to $\ov{\Omega}$ (or equivalently, an atlas of charts of $\ov{\Omega}$) in the following way. 

Take the pullback metric w.r.t.\ $G$ on $\ov{\Omega}$, and consider balls $B_r(x)$ of radius $r$ centered at $x\in\ov{\Omega}$. For $r<(10C_{A,0})^{-1}$, we now show that $G$ restricted to $B_r(x)$ must be a diffeomorphism onto its image: 

The only thing to check is injectivity. Suppose that $G(p)=G(q)$ for $p,q\in B_r(x)$. In this case the $\ov{\Omega}$ distance between $p$ and $q$ is realised by a $C^1$ curve (which is $C^2$ almost everywhere) made up of segments of geodesics and curves in $\partial \Omega$. By definition this curve has length less than $r=(10C_{A,0})^{-1}$ and, almost everywhere, curvature less than $C_{A,0}$. Therefore $G(\gamma)$ cannot self intersect, a contradiction.

Taking a finite cover $\{B_r(x_i)\}_{1\leq i\leq N}$ of $\ov\Omega$ by these balls and defining $S_i:=G(B_r(x_i))$, we define charts by $G|_{B_r(x_i)}:B_r(x_i) \ra S_i$, and note that by definition $G$ is the identity in these coordinates.  

\end{proof}

The following simple differential topology Lemma holds for general smooth flows which satisfy \eqref{eq:Abound} and \eqref{eq:nAbound}.
\begin{lemma}
Suppose that $M_0$ is Alexandrov immersed, and for $t\in[0,T)$, $M_t$ is a smooth flow such that the curvature bounds \eqref{eq:Abound} and \eqref{eq:nAbound} hold and suppose that on $M_0$,
\begin{equation}
\on{inj}(0)>\e\ . \label{eq:injbound}
\end{equation}  
Then there exists a constant $\tau=\tau(\epsilon, C_{A,0}, C_{A,1})$ such that $M_t$ remains Alexandrov immersed for all $t\in[0,\tau)$. 
\end{lemma}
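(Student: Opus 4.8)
The plan is to flow the canonical charts of Lemma~\ref{lem:canonicalconstruction} forward by the (locally well-defined) normal deformation and glue the resulting immersed domains together, using the injectivity radius lower bound \eqref{eq:injbound} to guarantee that the gluing produces a genuine immersion $G(\cdot,t):\ov\Omega\to\bb R^{n+1}$ for a short time. First I would fix the atlas $\{\phi_i:W_i\to S_i\}$ from Lemma~\ref{lem:canonicalconstruction} at $t=0$, and for each chart extend $G_0|_{W_i}$ to a map $G_i(\cdot,t)$ on a slightly smaller subset by moving the boundary portion $\partial\ov\Omega\cap W_i$ along the flow $X(\cdot,t)$ and interpolating into the interior via the geodesic normal coordinate construction (equivalently, solve the normal ODE $\partial_t G = -H\nu$ only on $\partial\ov\Omega$ and extend rigidly into a collar using $\g_t$). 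The curvature bounds \eqref{eq:Abound}, \eqref{eq:nAbound} ensure that this extension is smooth and that the pullback metrics $\ov g^{(i)}_{\alpha\beta}(\cdot,t)$ stay uniformly close to $\delta_{\alpha\beta}$ for $t$ in an interval whose length depends only on $C_{A,0},C_{A,1}$.

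The key step is to check that the locally-defined $G_i(\cdot,t)$ patch together into a single immersion of $\ov\Omega$. On overlaps $W_i\cap W_j$ the two constructions differ by a diffeomorphism of $\ov\Omega$ that is the identity on $\partial\ov\Omega$ and $C^1$-close to the identity (again using the derivative bounds), so the transition maps remain valid and $G(\cdot,t)$ is a well-defined smooth map; it is an immersion because $D G(\cdot,t)$ is a small perturbation of the invertible $D G_0$. The remaining point is that $G(\cdot,t)|_{\partial\ov\Omega}$ parametrises $M_t=\on{Im}(X(\cdot,t))$ and that $\nu$ still points out of the deformed $\ov\Omega$: the first is immediate from the construction (we moved the boundary by $X$ itself), and the second follows by continuity since the outward condition is open and holds at $t=0$.

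The main obstacle — and the only place where $\e$ enters — is ensuring no \emph{new} self-overlaps of $\ov\Omega$ form that would destroy the immersion property: even though each $G_i(\cdot,t)$ is locally an embedding, two far-apart points of $\ov\Omega$ could be pushed to collide. This is precisely where $\on{inj}(0)>\e$ is used: the collar $\g_0:\partial\ov\Omega\times[0,\e)$ is embedded at $t=0$, the boundary moves with normal speed $H$ bounded by $\sqrt{n\,C_{A,0}}$, and the second-derivative bounds control how fast the geodesic collar structure can degenerate, so for $t<\tau(\e,C_{A,0},C_{A,1})$ the collar of width $\e/2$ remains embedded and the interior portion $\{x : \on{dist}_{\ov g_0}(x,\partial\ov\Omega)\geq \e/2\}$ is unchanged as an abstract immersed region. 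I would make this quantitative by a standard tubular-neighbourhood/ODE-stability argument: the exponential map of $\ov g_t$ restricted to the normal bundle of $\partial\ov\Omega$ depends smoothly on $t$ with derivatives controlled by \eqref{eq:Abound}–\eqref{eq:nAbound}, so injectivity on $\partial\ov\Omega\times[0,\e/2)$ persists for a definite time. Granting this, $G(\cdot,t)$ is an immersion of $\ov\Omega$ realising the Alexandrov property for all $t\in[0,\tau)$, which is the claim.
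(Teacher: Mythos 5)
Your overall construction is the same one the paper uses: deform the initial Alexandrov immersion only in a collar about $\partial\ov\Omega$ so that the boundary tracks $X(\cdot,t)$, keep it fixed in the interior, and show the resulting map stays an immersion for a short time using \eqref{eq:Abound}, \eqref{eq:nAbound} (the paper does this with the single global formula \eqref{eq:Gmod} in the canonical charts of Lemma \ref{lem:canonicalconstruction}, and then bounds the pullback metric below by $\tfrac16-C_3t$). However, what you single out as the ``main obstacle'' rests on a misconception: collisions of far-apart points of $\ov\Omega$ under $G(\cdot,t)$ do not ``destroy the immersion property''. Being an immersion is a pointwise condition on $DG$; Definition \ref{def:AlexImmersed} imposes no injectivity on $G$, and allowing such self-overlaps of $G(\ov\Omega)$ is precisely the point of working with Alexandrov immersions rather than embeddings. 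So the step you call key --- persistence of embeddedness of the collar to rule out collisions --- establishes nothing that is needed, and it misplaces the role of $\e$. Where $\e$ genuinely enters is in setting up the modification: you must fix a collar width $\delta\lesssim\e$ at $t=0$ so that $\gamma_0:\partial\ov\Omega\times[0,2\delta)$ is injective \emph{into $\ov\Omega$} (making the projection $\Pi$, the distance $\rho$ and the cutoff well defined on a genuine tubular neighbourhood), and the cutoff derivative, of size $\delta^{-1}$, multiplies the $C^1$-difference between the flowed collar map and the static immersion. This is the estimate that actually has to be proved --- in the paper via $|D(X(\Pi(y),t)-X(\Pi(y),0))|\leq C_1t$ and $|D(\nu(\Pi(y),t)-\nu(\Pi(y),0))|\leq C_2t$, giving $\ov g_t\geq\tfrac16-C_3t$ --- and it is why $\tau$ depends on $\e$ as well as on $C_{A,0},C_{A,1}$. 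Your assertion that the pullback metrics stay close to $\delta_{\alpha\beta}$ for a time ``depending only on $C_{A,0},C_{A,1}$'' skips exactly this point.

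Two further gaps in the write-up: the chart-by-chart gluing is not justified --- if the locally defined $G_i(\cdot,t)$ really ``differ by a diffeomorphism close to the identity'' on overlaps, they do not thereby define a single map; you need either a chart-independent global construction (as in \eqref{eq:Gmod}) or an explicit interpolation with estimates. And ``extend rigidly into a collar using $\gamma_t$'' is circular if $\gamma_t$ means geodesics of $\ov g_t$, since $\ov g_t$ is the pullback metric of the very map $G(\cdot,t)$ you are constructing; the construction must use the fixed time-zero collar.
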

\begin{proof}
At time $t=0$ we work in the canonical coordinates as described in Lemma \ref{lem:canonicalconstruction} which we now denote $\widetilde{G}:\ov{\Omega}\ra\bb{R}^{n+1}$. We take $X(\cdot,0)=\widetilde{G}|_{\partial \ov{\Omega}}$, which we flow by MCF to get $X(\cdot, t)$. We write the pullback of the Euclidean metric with respect to $\widetilde{G}$ as $\widetilde{g}$, and quantities calculated with respect to this fixed metric will be denoted with a tilde. 

For $3\delta\leq \e$, we define 
\[\Omega_\d=\Omega\setminus\widetilde{\gamma}(\partial \ov{\Omega}\times[0,2\d])\] 
where $\widetilde{\gamma}$ are inward pointing geodesics as above. Let $\Pi:\ov{\Omega}\setminus \Omega_\delta\ra \Sigma$ be the smooth function defined by $\Pi(\tilde\gamma(x,\rho))=x$. 
Let  $\chi$ be a cutoff function with $\chi(v) = 0$ for $v\geq \d$ and $\chi(v)=1$ for $v\leq \frac \delta 2$. We define $\rho(x) = \widetilde{d}(x,\partial \Omega)$ where $\widetilde{d}(x,y)$ is the metric space distance arising from $\widetilde{g}$. For $y\in\ov{\Omega}$, we define $G$ to be $\widetilde{G}$, modified in a tubular region about $\partial \ov{\Omega}$:
\begin{equation}
G(y,t) = \chi\circ \rho(y) (X(\Pi(y),t)-\rho \nu(\Pi(y),t)) + (1-\chi\circ \rho(y)) \widetilde{G}(y,t)\label{eq:Gmod}.
\end{equation}
Here, we have abused notation to arbitrarily extend $\Pi$ to $\ov{\Omega}$. We write $\ov{g}_t$ for the pullback metric coming from $G(\cdot,t)$.

For $\delta$ small enough (depending on the curvature bound) at $t=0$ we have that for any unit vector $v$,
\[\ov{g}_0 \geq \frac 1 6 \on{Id}\]
and so this is a parametrisation. Indeed, using $\widetilde{G}$ to locally identify $\ov{\Omega}$ with $\bb{R}^{n+1}$ for $Y$ a Euclidean unit vector orthogonal to the line $\widetilde{\gamma}$, we have that, 
\[D\Pi(Y) = \frac{1}{1-\rho h(Y,Y)}Y, \qquad DG(\cdot,0)(Y)=\left[\chi\circ \rho(\frac{1}{1-\rho h(Y,Y)} -\rho h(Y,Y))+(1-\chi\circ\rho)\right]Y,\]
 where we have used that the part where $\chi\circ\rho$ is differentiated vanishes because $X(\Pi(y),0) - \rho\nu(\Pi(y),0) = \tilde G(y,0)$ in our canonical representation of $\tilde G$. 
For $Z$, a unit vector in direction $\pard{\tilde\gamma(x,s)}{s}$, we have that 
\[DG(\cdot,0)(Z)=Z.\]
By picking coordinates in line with the principal directions we have that  $\ov{g}$ is the identity matrix and the rest given by \[(\chi\circ\rho (\frac{1}{1-\rho \kappa_i}-\rho \kappa_i)+1-\chi\circ\rho)^2.\]
For $\delta<\min\{\frac{1}{2C_{A,0}}, \frac 1 3 \e\}$, this may be estimated by $1-\frac{5}{6}\chi\circ\rho \geq \frac 1 6$, see \cite[p.\ 354]{GilbargTrudinger} for similar calculations.

As the curvature of the flowing manifold is bounded, we may also crudely estimate that \[|D(X_t(\Pi(y),t)-X_0(\Pi(y))|_{\tilde{g}}<C_1(C_{A,0}) t\]
and
\[|D(\nu_t(\Pi(y),t)-\nu_0(\Pi(y))|_{\tilde{g}}<C_2(C_{A,0}, C_{A,1}) t\]
using standard means. As $X_t$ is the only part of $G_t$ which depends on time, we may estimate that for any $\widetilde{g}$ unit vector $v$
\[|\ov{g}_t(v,v)-\ov{g}_0(v,v)|\leq C_3(C_{A,0}, C_{A,1})t\ ,\]
and so
\[\ov{g}_t(v,v)\geq \frac 1 6-C_3t\ .\]
Therefore we see that there exists a time depending only on  $C_{A,0}, C_{A,1}$ and $\e$  such that $G$ remains a parametrisation and $M_t$ remains Alexandrov immersed.
\end{proof}

\begin{cor}
If $M_0$ is Alexandrov immersed and $M_t$ is a smooth flow satisfying the estimate \eqref{eq:Abound} and \eqref{eq:nAbound} then if $M_t$ ceases to be Alexandrov immersed for the first time at time $T$ then as $t\ra T$, $\on{inj}(t)\ra 0$. 
\end{cor}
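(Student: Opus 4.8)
The plan is to argue by contradiction, using the preceding Lemma as a black box: a lower bound on the injectivity radius at a given time forces the flow to remain Alexandrov immersed for a definite further time, where that time depends only on the injectivity lower bound and on the curvature constants $C_{A,0}, C_{A,1}$ — crucially, \emph{not} on the base time. Since $C_{A,0}$ and $C_{A,1}$ are assumed to hold uniformly on $[0,T)$, this time does not degenerate as we approach $T$, and this uniformity is exactly what drives the contradiction.

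Concretely, suppose the conclusion fails, so $\on{inj}(t)\not\ra 0$ as $t\ra T$. Then there exist $\e>0$ and a sequence of times $t_i\ra T$ with $t_i<T$ and $\on{inj}(t_i)>\e$ for every $i$. Because $T$ is the \emph{first} time at which $M_t$ fails to be Alexandrov immersed, each $M_{t_i}$ is still Alexandrov immersed, so the hypotheses of the preceding Lemma apply to the time-translated flow $s\mapsto M_{t_i+s}$: its initial slice $M_{t_i}$ is Alexandrov immersed, it satisfies \eqref{eq:Abound} and \eqref{eq:nAbound} on its interval of definition with the same constants, and its initial injectivity radius is $\on{inj}(t_i)>\e$. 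The Lemma therefore produces $\tau=\tau(\e,C_{A,0},C_{A,1})>0$, independent of $i$, such that $M_t$ remains Alexandrov immersed for all $t\in[t_i,t_i+\tau)$.

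Finally, since $t_i\ra T$ we may pick $i$ large enough that $T-t_i<\tau$. Then $[0,t_i+\tau)$ contains a neighbourhood of $T$ in $[0,\infty)$, and $M_t$ is Alexandrov immersed for every $t\in[0,t_i+\tau)$, contradicting the assumption that $M_t$ ceases to be Alexandrov immersed at $T$. This proves the corollary. There is no analytic difficulty here: the only points needing care are extracting a genuine sequence $t_i\ra T$ along which $\on{inj}$ stays bounded below (rather than a mere $\limsup$ statement), and observing that the time $\tau$ furnished by the Lemma is uniform in $i$ precisely because the curvature bounds do not blow up as $t\ra T$; this uniform $\tau$ is what allows us to overshoot $T$.
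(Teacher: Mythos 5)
Your argument is correct and is essentially the intended one: the paper states this corollary without proof as an immediate consequence of the preceding lemma, and your contradiction argument — time-translating the flow to a time $t_i$ with $\on{inj}(t_i)>\e$ and using that the lemma's $\tau$ depends only on $\e$, $C_{A,0}$, $C_{A,1}$ (hence is uniform in $i$) to push Alexandrov immersedness past $T$ — is exactly that deduction.
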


\begin{proposition}\label{prop:PresAlexImmersed}
Given any compact Alexandrov immersed $M_0$, mean curvature flow remains Alexandrov immersed until the first singular time.
\end{proposition}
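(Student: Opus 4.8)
The plan is to combine the short-time persistence lemma and its corollary with a limiting argument ruling out $\on{inj}(t)\to 0$ away from singularities. By the corollary, it is enough to show that if $M_t$ is a smooth mean curvature flow of a compact Alexandrov immersed $M_0$ and $T$ is a time before the first singular time (so that $\sup_{[0,T]}|A|<\infty$, hence \eqref{eq:Abound} and \eqref{eq:nAbound} hold), then $\inf_{[0,T)}\on{inj}(t)>0$. Indeed, were $M_t$ to cease being Alexandrov immersed for the first time at some $T_0$ before the first singular time, the corollary would give $\on{inj}(t)\to 0$ as $t\to T_0$, contradicting this uniform lower bound; hence Alexandrov immersedness persists up to the first singular time.

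To prove the lower bound on $\on{inj}(t)$, I would argue by contradiction, taking $t_k\to T$ with $\on{inj}(t_k)\to 0$. The structural fact to exploit is that $(\ov\Omega,\ov g_t)$ is flat, since $G(\cdot,t)$ is a codimension-zero immersion into $\bb{R}^{n+1}$; consequently $G(\cdot,t)$ sends the inward boundary-normal geodesics to straight segments, $G(\gamma_t(x,s),t)=X(x,t)-s\,\nu(x,t)$. A failure of injectivity of $\gamma_{t_k}$ at parameter $\l_k:=\on{inj}(t_k)$ produces $x_k\neq y_k\in\partial\ov\Omega$ and $s_k,s_k'\le\l_k$ with $\gamma_{t_k}(x_k,s_k)=\gamma_{t_k}(y_k,s_k')$, hence a path in $\ov\Omega$ of length $s_k+s_k'\to 0$ joining $x_k$ to $y_k$; applying $G(\cdot,t_k)$ gives $|X(x_k,t_k)-X(y_k,t_k)|\le s_k+s_k'\to 0$. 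Since $|A|\le C_{A,0}$ prevents focal points of $\partial\ov\Omega$ before parameter comparable to $C_{A,0}^{-1}$, for $k$ large $\gamma_{t_k}$ is a local diffeomorphism on $\partial\ov\Omega\times[0,\l_k]$ at a uniform scale, so the colliding pairs must be ``global'': $d_{\partial\ov\Omega}(x_k,y_k)$ stays bounded below. Passing to a subsequence, $x_k\to x_*$, $y_k\to y_*$ with $x_*\neq y_*$ in $M$ but $X(x_*,T)=X(y_*,T)=:q$.

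The short path through $\ov\Omega$ then forces the portion of $\ov\Omega$ trapped between the two sheets of $M_{t_k}$ near $q$ to be thin, and a short argument (using that transverse self-intersections, and tangential contact with coinciding outward normals, cannot yield such short paths in $\ov\Omega$) shows that in the limit the two local sheets of $M_T$ through $q$ are tangent at $q$ with opposite unit normals, with $\ov\Omega$ occupying the region between them. Representing these two sheets as graphs $u^{(1)}_t\ge u^{(2)}_t$ over a fixed ball in $T_qM_T$ for $t$ near $T$ --- uniformly possible by the curvature bounds --- the width $w_t:=u^{(1)}_t-u^{(2)}_t\ge 0$ solves a linear uniformly parabolic equation with no zeroth-order term, is strictly positive for $t<T$ (since $\on{inj}(t)>0$ keeps the sheets from touching), and vanishes at the interior spacetime point $(q,T)$. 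This contradicts the strong maximum principle, and the argument is complete.

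I expect the main obstacle to be this last step: extracting from $\on{inj}(t_k)\to 0$ a genuine two-sheeted graphical picture on a domain of fixed size, valid up to and including time $T$, with the enclosed region verifiably between the sheets (so that $w_t\ge 0$ and $w_t>0$ for $t<T$) and $w_t$ satisfying a parabolic equation to which the strong maximum principle applies at the final time. Routing everything through the corollary --- so that one only needs to exclude $\on{inj}\to 0$ rather than estimate $\on{inj}(t)$ quantitatively and confront the ``thin $\ov\Omega$'' scenario directly --- is what makes this tractable.
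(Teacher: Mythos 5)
Your proposal follows essentially the same route as the paper: reduce via the corollary to excluding $\on{inj}(t)\to 0$ before the singular time, extract a limiting double point with opposite normals from colliding inward normal geodesics, and conclude with the strong maximum principle applied to the difference of the two local graph functions. The only real difference is stylistic: the paper takes points $p(t),q(t)$ realising $\on{inj}(t)$, so the two normal segments form a single unbroken geodesic and the normals are exactly anti-parallel at each time --- a device you could borrow to make your sketched ``short argument'' for tangency with opposite normals (and the ordering of the two graphs up to time $T$) concrete, since that is precisely the step the paper itself also treats only briefly.
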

\begin{proof}
We prove that the flow remains Alexandrov immersed while \eqref{eq:Abound} and \eqref{eq:nAbound} hold.

From the previous Corollary, for the flow to cease to be Alexandrov immersed at time $T$ we must have that $\on{inj}_t\ra 0$ as $t\ra T$.

By compactness, at any given time $t$, we may find $p(t), q(t)$ which realise $\on{inj}(t)$. That is, a point $p(t)$ and a point $q(t)$ such that $\gamma_t(p(t), \on{inj}(t))=\gamma_t(q(t), \on{inj}(t))$. In fact, there is an unbroken unit speed geodesic $\gamma$ starting at $p(t)$ and ending at $q(t)$ of length $2\on{inj}(t)$ and $\ip{\nu(p,t)}{X(p,t)-X(q,t)}=-2\on{inj}(t)=\ip{\nu(q,t)}{X(q,t)-X(p,t)}$ (as otherwise we may find a geodesic which contradicts the definition of $\on{inj}(t)$). 

Taking a sequence of times $t_i\ra T$. Then for $(p_i,q_i):=(p(t_i),q(t_i))$, as above, there exists a subsequence which converges to some $(p,q)\in M^n\times M^n$ as $i \ra \infty$, where $p$ and $q$ are distinct. We consider $M_t\cap B_r(X(p,t))$ where $r<\frac 1 2 C_{A,0}^{-1}$. For $t$ close enough to $T$, the connected component of $M_t\cap B_r(X(p,T))$ which contains $X(p,T)$ may be written as a graph in graph direction $\nu(p,T)$. Furthermore, an open region about $X(q,t)$ may also be written as a graph in direction $\nu(p,T)$ (in fact, $\nu(q,T)=-\nu(p,T)$). These graphs are initially disjoint and must remain so until time $T$ (due to the curvature bound and the nonzero injectivity). However, this contradicts the strong maximum principle for the difference between the two graph functions, and so we must have that $\on{inj}_t>0$ for all time. 
\end{proof}

The above shows the following:
\begin{cor}
A compact flowing manifold with bounds on the curvature \eqref{eq:Abound} and \eqref{eq:nAbound}  may only loose the property of being Alexandrov immersed at time $T$ if there exist points $p, q(t)\in M^n$ so that $|X(p,t)-X(q(t),t)|$ goes to zero with $\ip{\nu(p,t)}{\nu(q(t),t)}=-1$ and where \\ $\ip{\nu(p)}{X(q(t),t)-X(p,t)}<0$ and $\ip{\nu(q,t)}{X(p,t)-X(q(t),t)}<0$ for $T-\delta<t<T$.
\end{cor}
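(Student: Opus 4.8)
The statement is, in effect, a record of the configuration constructed inside the proof of Proposition~\ref{prop:PresAlexImmersed}, so the plan is to isolate the part of that argument which uses neither mean curvature flow nor the maximum principle. By the discussion above (the Corollary just before Proposition~\ref{prop:PresAlexImmersed}), if $M_t$ is Alexandrov immersed for $t<T$ and not at $T$, then $\on{inj}(t)\to 0$ as $t\nearrow T$. For $t$ close enough to $T$ that $\on{inj}(t)$ lies below the focal radius of $M_t$ (which is bounded below in terms of $C_{A,0}$ by \eqref{eq:Abound}), compactness of $\partial\ov{\Omega}\cong M^n$ lets me pick $p(t)\neq q(t)$ realising the injectivity radius, with $\gamma_t(p(t),\on{inj}(t))=\gamma_t(q(t),\on{inj}(t))=:m(t)$; as in the proof of Proposition~\ref{prop:PresAlexImmersed} one checks both geodesic parameters may be taken equal to $\on{inj}(t)$.

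The structural input is that $G_t\colon\ov{\Omega}^{\,n+1}\to\bb{R}^{n+1}$ is an immersion between equidimensional manifolds, hence, with the pulled-back metric, a local isometry onto an open set; it therefore maps the inward normal geodesics to straight line segments, so
\[
G_t(m(t)) = X(p(t),t)-\on{inj}(t)\,\nu(p(t),t) = X(q(t),t)-\on{inj}(t)\,\nu(q(t),t).
\]
Subtracting gives $X(p(t),t)-X(q(t),t)=\on{inj}(t)\bigl(\nu(p(t),t)-\nu(q(t),t)\bigr)$, so already $|X(p(t),t)-X(q(t),t)|\leq 2\on{inj}(t)\to 0$. Also $\nu(p(t),t)\neq\nu(q(t),t)$: equality would force $X(p(t),t)=X(q(t),t)$, and then $\gamma_t(p(t),\on{inj}(t)-\delta)$ and $\gamma_t(q(t),\on{inj}(t)-\delta)$ would be distinct points of $\ov{\Omega}$ (injectivity below $\on{inj}(t)$) with the same image under $G_t$, contradicting the injectivity of $G_t$ on balls of radius $(10C_{A,0})^{-1}$ as in Lemma~\ref{lem:canonicalconstruction}. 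Granting $\nu(p(t),t)=-\nu(q(t),t)$, the displayed identity becomes $X(p(t),t)-X(q(t),t)=2\on{inj}(t)\,\nu(p(t),t)$, and pairing with $\nu(p(t),t)$ and $\nu(q(t),t)$ yields all three asserted relations.

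The heart of the matter is thus the antipodality $\ip{\nu(p(t),t)}{\nu(q(t),t)}=-1$; I would obtain it by excluding $\nu(p(t),t)\neq\pm\nu(q(t),t)$ via an implicit function theorem argument. Consider the smooth map $\Phi(p',q',\ell):=c_{p'}(\ell)-c_{q'}(\ell)$, where $c_x(\ell):=X(x,t)-\ell\,\nu(x,t)$, defined near $(p(t),q(t),\on{inj}(t))$, where $\Phi$ vanishes. One computes $D_wc_x(\ell)=(\mathrm{Id}-\ell\,\mathcal{W}_x)(w)$ for $w\in T_xM_t$, with $\mathcal{W}_x$ the Weingarten map, which is invertible because $\on{inj}(t)$ is below the focal radius, while $\partial_\ell\Phi=\nu(q',t)-\nu(p',t)\neq 0$; hence $D\Phi$ is surjective and $\Phi^{-1}(0)$ is an $n$-dimensional manifold near that point. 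If $T_{p(t)}M_t\neq T_{q(t)}M_t$ — equivalently $\nu(p(t),t)\neq\pm\nu(q(t),t)$ — then $T_{p(t)}M_t+T_{q(t)}M_t=\bb{R}^{n+1}$, so $\Phi^{-1}(0)$ contains a curve along which $\ell$ strictly decreases, producing $(p',q',\ell')$ with $\ell'<\on{inj}(t)$ and $c_{p'}(\ell')=c_{q'}(\ell')$; since $p'\neq q'$ and $\gamma_t(p',\ell'),\gamma_t(q',\ell')$ lie close to $m(t)$, the local injectivity of $G_t$ upgrades this to $\gamma_t(p',\ell')=\gamma_t(q',\ell')$ in $\ov{\Omega}$, contradicting the definition of $\on{inj}(t)$. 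Thus $\nu(p(t),t)=\pm\nu(q(t),t)$, and the $+$ case was already excluded. I expect this final step — verifying that the $\ell$-coordinate is genuinely non-stationary along $\Phi^{-1}(0)$ exactly when the two tangent planes differ, and that the perturbed meeting point stays in a region where $G_t$ is injective — to be the part requiring real care; everything else is a direct reading of the geometry already present in the proof of Proposition~\ref{prop:PresAlexImmersed}.
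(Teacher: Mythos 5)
Your argument is correct, and it is more detailed than what the paper actually supplies: the paper gives no separate proof of this corollary, but reads it off (``The above shows the following'') from the proof of Proposition~\ref{prop:PresAlexImmersed}, where the key geometric fact is disposed of in a single parenthetical --- once $\on{inj}(t)\ra 0$ (by the preceding corollary), the injectivity radius is realised by an \emph{unbroken} unit-speed geodesic of length $2\on{inj}(t)$ meeting $\partial\ov{\Omega}$ orthogonally at both ends, ``as otherwise we may find a geodesic which contradicts the definition of $\on{inj}(t)$''; antipodality of the normals and the sign conditions then follow exactly as in your final display. Where you genuinely differ is in how that configuration is justified: in place of the classical first-variation/shortcut argument behind the unbroken-geodesic assertion, you exclude $\nu(p)=\nu(q)$ via local injectivity of $G$ (Lemma~\ref{lem:canonicalconstruction}) and exclude $\nu(p)\neq\pm\nu(q)$ by an implicit-function-theorem perturbation of the double-normal map $\Phi$, sliding along $\Phi^{-1}(0)$ to decrease $\ell$ and then using local injectivity of $G$ once more to convert the Euclidean coincidence $c_{p'}(\ell')=c_{q'}(\ell')$ into a coincidence of $\gamma_t$ in $\ov{\Omega}$ at parameter $\ell'<\on{inj}(t)$, contradicting injectivity of $\gamma_t$ on $\partial\ov{\Omega}\times[0,\on{inj}(t))$. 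This is a legitimate alternative route: it uses only smooth dependence, the focal-radius bound from \eqref{eq:Abound} (which makes $\mathrm{Id}-\ell\,\mathcal{W}$ invertible and rules out a focal-type degeneration of the realising pair), and local injectivity of $G$, and, like the corollary itself, it invokes neither mean curvature flow nor the maximum principle. What the paper's variational route buys in exchange is the exact identity $|X(p,t)-X(q,t)|=2\on{inj}(t)$ and the unbroken geodesic itself (useful in the proposition), whereas you only obtain $|X(p,t)-X(q,t)|\leq 2\on{inj}(t)$ --- which is all the corollary needs. If you write this up, note explicitly that a normal geodesic from $\partial\ov{\Omega}$ cannot meet $\partial\ov{\Omega}$ again at a parameter below $\on{inj}(t)$ (otherwise the pair consisting of the hit point with parameter $0$ already violates injectivity), so $\gamma_t(p',\cdot)$ is indeed defined up to $\ell'$ and its $G$-image is the segment $c_{p'}$; also, your signs $\ip{\nu(p)}{X(q)-X(p)}=-2\on{inj}(t)<0$ match the corollary as stated (the corresponding displayed identity inside the proof of Proposition~\ref{prop:PresAlexImmersed} appears to carry a sign typo).
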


We now consider the idea of a comparison solution for Alexandrov immersions. \begin{defses} Suppose that we have a larger compact $n+1$ dimensional manifold with smooth boundary $\ov{\Psi}$ such that $\ov{\Omega}\subset\ov{\Psi}$. Suppose that \label{def:AlexComp}
\begin{enumerate}
\item At time $t=0$, $G(\cdot,0)$ may be extended to $\ov{G}:\ov{\Psi}\ra \bb{R}^{n+1}$. \label{barrier1}
\item There is a parametrisation $\ov{G}:\ov{\Psi}\times[0,T) \ra \bb{R}^{n+1}$. We write the image of $\partial \ov \Psi$ under $G$ to be $N_t$, and we assume that this is a smoothly varying manifold. We write its outward unit normal as $\mu$ and write a local parametrisation of $N_t$ as $Y$. \label{barrier2}
\item We assume that $N_t$ satisfies a barrier condition, namely that
\[\ip{\ddt{Y}}{\mu(x,t)}\geq -H^N.\] \label{barrier3}
\end{enumerate}
Then $N_t$ will be called an \emph{Alexandrov comparison solution} for $M_t$.
\end{defses}
We note that $\partial \Omega$ is embedded in $\ov\Psi$ at time $t=0$ and we may view the pullback of $M_t$ as an initially embedded mean curvature flow with respect to $\ov{g}(\cdot,t)$, starting from $\partial \Omega$. The following proposition is therefore a minor embellishment on the standard comparison Lemma for mean curvature flow.

\begin{lemma}[Comparison solutions]\label{lem:Comparison}
Suppose that Definition \ref{def:AlexComp}\ref{barrier1}), \ref{def:AlexComp}\ref{barrier2}), \ref{def:AlexComp}\ref{barrier3}) above hold. Then, for $\ov{g}$ the pullback of the Euclidean metric, writing $d=\on{dist}_{\ov{g}}(\partial \ov\Omega, \partial \ov\Psi)$, $d$ is non-decreasing. $\ov{G}^*M_t$ remains embedded in $\ov\Psi$ and we may therefore identify $\ov\Omega$ with the closure of a moving domain in $\ov\Psi$.
\end{lemma}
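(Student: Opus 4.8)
The plan is to lift the flow $M_t$ into $\ov\Psi$ and then run the classical mean curvature flow comparison argument there; the only genuinely new point is that $\ov G$ is an immersion rather than an embedding. At $t=0$ the hypersurface $\partial\ov\Omega$ is embedded in the interior of $\ov\Psi$, separates it, and (as $\ov\Omega$ is compact and contained in $\ov\Psi$) $d(0)=\on{dist}_{\ov g(\cdot,0)}(\partial\ov\Omega,\partial\ov\Psi)>0$. Since $\ov G(\cdot,t)$ is an immersion between $(n+1)$--manifolds, $D\ov G$ is invertible, so the ODE $D\ov G(\partial_t\Phi)=-H\nu-\partial_t\ov G$ with $\Phi(\cdot,0)$ the given embedding of $\partial\ov\Omega$ into $\ov\Psi$ produces, on the maximal time interval on which its image stays off $\partial\ov\Psi$, a family $\Phi(\cdot,t)\colon M^n\to\ov\Psi$ with $\ov G(\Phi(\cdot,t),t)=X(\cdot,t)$. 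Because $\ov G(\cdot,t)$ is a local isometry of $(\ov\Psi,\ov g(\cdot,t))$ onto Euclidean space, $\Sigma_t:=\Phi(M^n,t)=\ov G^{\ast}M_t$ has the same local unit normal and mean curvature as $M_t$ and moves with $\ov g(\cdot,t)$--normal velocity $-H_{\Sigma_t}-\langle\partial_t\ov G,\nu\rangle$; it bounds a lift of the Alexandrov domain $\ov\Omega_t$ of Proposition~\ref{prop:PresAlexImmersed}.

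The heart of the matter is that $d(t):=\on{dist}_{\ov g(\cdot,t)}(\Sigma_t,\partial\ov\Psi)$ is non-decreasing. Fix $t_0$ with $d(t_0)>0$ and points $q^{\ast}\in\Sigma_{t_0}$, $y^{\ast}\in\partial\ov\Psi$ realising $d(t_0)$, joined by a minimising $\ov g(\cdot,t_0)$--geodesic $\gamma$. Its image $\ov G\circ\gamma$ is a Euclidean straight segment of length $d(t_0)$, hence embedded, so $\ov G(\cdot,t_0)$ is injective on a tubular neighbourhood of $\gamma$ and, by openness of the immersion condition, $\ov G(\cdot,t)$ embeds that neighbourhood for all $t$ close to $t_0$; in the resulting Euclidean chart $M_t$ and $N_t$ are embedded near $\ov G(q^{\ast})$ and $\ov G(y^{\ast})$. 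I would then run the standard comparison computation in that chart: on a small product neighbourhood of $(q^{\ast},y^{\ast})$ set $u(p,q,t)=|X(p,t)-Y(q,t)|^2$, whose minimum equals $d(t)^2$ and is attained interiorly for $t$ near $t_0$. The first variation makes the connecting segment orthogonal to both $M_t$ and $N_t$ and identifies $\nu$ with $\mu$ along it; positive semidefiniteness of the full Hessian of $u$ in $(p,q)$ then forces $A^{M_t}(v,v)\ge A^{N_t}(v,v)$ for every $v$ tangent to $M_t$, with the second fundamental form of $N_t$ taken with respect to the normal pointing towards $M_t$, which after tracing gives $H_{M_t}\ge H^N$ at the realising points. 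Differentiating $u$ in time, using $\partial_t X=-H_{M_t}\nu$ and the barrier inequality $\langle\partial_t Y,\mu\rangle\ge -H^N$ from Definition~\ref{def:AlexComp}\ref{barrier3}), yields $\partial_t u\ge 2d(t_0)\bigl(H_{M_t}-H^N\bigr)\ge 0$ at the minimising pair, hence $\frac{d}{dt}d(t_0)\ge 0$ in the one--sided Dini sense. As $t_0$ was arbitrary on $\{d>0\}$ and $d(0)>0$, $d$ is non-decreasing and in particular $d\ge d(0)>0$ throughout, so $\Sigma_t$ never meets $\partial\ov\Psi$.

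Finally, since $d\ge d(0)>0$, the hypersurface $\Sigma_t$ stays in the interior of $\ov\Psi$, and it stays embedded there: a first tangential self-contact of $\Sigma_t$ would project under the (locally embedding) $\ov G$ to a tangential self-contact of $M_t$, which is excluded by the strong maximum principle exactly as in the proof of Proposition~\ref{prop:PresAlexImmersed}. Thus $\{\Sigma_t\}$ is a smooth isotopy of embedded separating hypersurfaces of $\ov\Psi$ avoiding $\partial\ov\Psi$, and the isotopy extension theorem exhibits, for each $t$, the component $\ov\Omega_t$ on the $\ov\Omega$ side as a domain in $\ov\Psi$ with $\ov G(\cdot,t)|_{\ov\Omega_t}$ an Alexandrov immersion of $M_t$; this is the asserted identification of $\ov\Omega$ with the closure of a moving domain in $\ov\Psi$.

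The step I expect to be the main obstacle is the middle one: transporting the classical first- and second-variation comparison through the immersion $\ov G$ — the crux being that $\ov G$ restricts to a genuine embedding on a neighbourhood of the minimising geodesic, so that the Euclidean computation applies there verbatim — together with the routine handling of the non-smoothness of $t\mapsto d(t)$ through the Dini-derivative form of the parabolic maximum principle.
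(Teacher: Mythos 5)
Your argument is correct and is essentially the paper's own proof: the paper likewise regards the pulled-back flow as an embedded flow inside $\ov\Psi$ with the locally Euclidean pullback metric $\ov{g}$, invokes the standard distance-comparison computation (it cites Mantegazza for exactly the first/second-variation and time-derivative argument at a minimizing pair that you write out, legitimate here because $\ov{G}$ embeds a neighbourhood of the minimizing geodesic, and the barrier condition enters with the harmless sign you verify), and excludes self-intersections by the strong maximum principle for local graphs as in Proposition \ref{prop:PresAlexImmersed}. The only blemish is a sign-convention slip in your intermediate Hessian statement — both second fundamental forms should be taken with respect to the common normal direction $\nu(q^{*})=\mu(y^{*})$, giving $h^{M}(v,v)\ge h^{N}(v,v)$ — but the traced inequality $H_{M_t}\ge H^{N}$ that you actually use is the correct one.
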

\begin{proof}
As $\partial \Omega$ is embedded inside $ \ov{\Psi}$, this is essentially the standard proof, see \cite[Theorem 2.2., p28]{Mantegazza}. If $\partial \ov{\Omega}$ hits $\partial \ov{\Psi}$, then we again get a contradiction to the strong maximum principle for local graphs of mean curvature flow. The only modifications here is that we have allowed barriers - it is quick to check that these do not affect the inequalities which lead to the required contradiction. Similarly, by considering a time of first intersection of $M_t$ with itself, we see that $M_t$ remains embedded in $\ov{\Psi}$ as otherwise we again contradict the strong maximum principle.
\end{proof}

\section{Preservation of noncollapsedness}
We take $\ov{g}$ to be the time dependent metric on $\ov \Omega$ given by pulling back the Euclidean metric.

\begin{defses}\label{def:innernoncollapsed}
An Alexandrov immersed, strictly mean convex manifold is inner $\alpha$-non\-collapsed if at every $x\in M^n=\partial \ov{\Omega}$ there is a closed geodesic ball (w.r.t.\ $\ov{g}$) $B^x_{r}\subseteq \Omega$ of radius $r=\frac \alpha {H(x)}$ with $x\in \partial B^x_{r}$.
\end{defses}

Clearly the inside of the domain plays an important role here, and this is not available in the outer noncollapsing case. We instead use an Alexandrov outer comparison solution, as in Definition \ref{def:AlexComp}. Essentially this replaces $\bb{R}^n\setminus \Omega$ in the embedded case. We define the closed set $\ov \chi_t = \ov{\Psi}_t\setminus \overset{\circ}{\ov{\Omega}}_t$ and, as previously, we may define pull back the Euclidean metric to get a time dependent metric $\ov{g}$ on $\Psi\setminus \Omega$.
\begin{defses}\label{def:outernoncollapsed}
An Alexandrov immersed, strictly mean convex manifold is $\alpha$-non\-collapsed (with respect to $N_t$) if at every point $x\in M^n$ there is a closed geodesic ball (w.r.t.\ $\ov{g}$) $B^x_r\subseteq \chi$ of radius $r=\frac{\alpha}{H(x)}$ with $x\in \partial B_r^x$. 
\end{defses}

\begin{theorem}\label{thm:noncollapsing}
Suppose that $M_0$ is compact, Alexandrov immersed and strictly mean convex. Then there exists an $\alpha$ such that $M_t$ is inner $\alpha$-non\-collapsed and there exists an Alexandrov comparison solution $N_t$ such that $M_t$ is outer $\alpha$-non\-collapsed with respect to $N_t$.
\end{theorem}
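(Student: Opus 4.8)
The plan is to reduce both statements to the corresponding theorems for embedded mean-convex mean curvature flow by exploiting the canonical charts of Lemma~\ref{lem:canonicalconstruction} together with Andrews' maximum-principle argument for noncollapsing. First I would establish the inner statement. By Proposition~\ref{prop:PresAlexImmersed} the flow stays Alexandrov immersed up to the first singular time, so $\ov g(\cdot,t)$ is a smooth time-dependent metric on the fixed compact manifold $\ov\Omega$, and $\partial\ov\Omega = M^n$ flows by mean curvature with respect to $\ov g$. Since $M_0$ is compact and strictly mean convex, $H \geq H_{\min}>0$ on $M_0$, and by compactness $\on{inj}(0)>\varepsilon$ for some $\varepsilon>0$; hence there is some (small) $\alpha_0>0$ for which $M_0$ is inner $\alpha_0$-noncollapsed, simply because each point admits an interior geodesic ball of radius $\alpha_0/H(x) < \varepsilon$ and, by the curvature bound and the choice of radius $<\tfrac12 C_{A,0}^{-1}$, such a ball embeds in $\ov\Omega$ via the canonical charts. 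Decreasing $\alpha_0$ if necessary we may take the same $\alpha$ to work simultaneously for the inner and (once $N_t$ is fixed) outer conditions.

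The heart of the inner statement is preservation. Here I would run Andrews' argument in the metric $\ov g$ on $\ov\Omega$. Define the inner noncollapsing quantity
\[
Z(x,y,t) = \frac{H(x)}{2}\,|G(x,t)-G(y,t)|^2 + \langle G(x,t)-G(y,t),\nu(x,t)\rangle
\]
for $x\in\partial\ov\Omega$ and $y\in\ov\Omega$, where distances and the inner product are Euclidean in the ambient $\bb R^{n+1}$ via the local immersion $G$; being inner $\alpha$-noncollapsed is equivalent to $Z(x,y,t)\le 0$ whenever this makes sense locally, i.e.\ the ball $B_{\alpha/H(x)}(\cdot)$ stays inside $\Omega$. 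The key point, which is exactly why the Alexandrov property (not mere immersedness) is needed, is that the strong maximum principle / Hamilton's trick applies: a first touching of $Z$ with $0$ at an interior $y$ forces, via the immersion $G$ and positive injectivity radius, that the two sheets are genuinely tangent in $\bb R^{n+1}$, and the computation of $(\partial_t - \Delta_x)Z$ (carried out for the embedded case by Andrews and reproduced for instance in Haslhofer--Kleiner) goes through verbatim because it is purely local on $M^n$ and the ambient geometry is flat. If $y$ reaches $\partial\Omega$ the argument is the classical avoidance/strong maximum principle for the embedded flow $\partial\Omega$ in $(\ov\Omega,\ov g)$, already covered by Lemma~\ref{lem:Comparison}. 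This gives $Z\le 0$ for all $t$, i.e.\ inner $\alpha$-noncollapsedness is preserved.

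For the outer statement I would first produce the comparison solution: take $\ov\Psi$ to be a large round ball (or a slight thickening of $\ov\Omega$ obtained by flowing $\partial\ov\Omega$ outward a fixed small geodesic distance $d_0$ and smoothing), extend $G_0$ to $\ov G_0:\ov\Psi\to\bb R^{n+1}$ — possible by the finite canonical atlas and $\on{inj}(0)>\varepsilon$ if $d_0<\varepsilon$ — and let $N_t=\ov G(\partial\ov\Psi,\cdot)$ flow by mean curvature itself, which trivially satisfies the barrier condition~\ref{def:AlexComp}\ref{barrier3}) with equality; Lemma~\ref{lem:Comparison} then guarantees $N_t$ exists as long as $M_t$ does and $\ov\Omega_t\subset\ov\Psi_t$ with $\on{dist}_{\ov g}(\partial\ov\Omega,\partial\ov\Psi)$ non-decreasing. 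Now one repeats the Andrews argument in $\ov\chi_t=\ov\Psi_t\setminus\overset{\circ}{\ov\Omega}_t$ with the outer quantity
\[
\ov Z(x,y,t) = \frac{H(x)}{2}\,|G(x,t)-G(y,t)|^2 - \langle G(x,t)-G(y,t),\nu(x,t)\rangle,
\]
$x\in M^n$, $y\in\ov\chi_t$. As before, an interior touching of $\ov Z$ with $0$ is ruled out by the Andrews computation, while a touching with the outer boundary $N_t$ is prevented because $N_t$ is a mean curvature flow barrier lying a definite $\ov g$-distance away. Since $\ov\chi_0$ is a thin collar of uniform $\ov g$-width $d_0$, every $x\in M_0$ trivially admits an exterior ball of radius $\alpha/H(x)$ inside $\ov\chi_0$ once $\alpha/H_{\max} < \min\{d_0,\tfrac12 C_{A,0}^{-1}\}$, so the outer $\alpha$-noncollapsing holds at $t=0$ and hence for all $t$.

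The main obstacle is not the noncollapsing computation itself — that is local and identical to the embedded case — but justifying that Hamilton's strong maximum principle trick genuinely applies to $Z$ and $\ov Z$ despite $M^n$ being only immersed: one must check that at a spatial interior maximum the relevant chord realizes a true tangency of embedded local sheets in $\bb R^{n+1}$, which is where the positivity of $\on{inj}(t)$ (Proposition~\ref{prop:PresAlexImmersed}) and the curvature bound enter, and that the ``$y$ hits $\partial\Omega$'' and ``$y$ hits $\partial\Psi$'' boundary cases reduce cleanly to Lemma~\ref{lem:Comparison}. A secondary technical point is the construction of $\ov\Psi$ and the extension $\ov G_0$: one needs the outer collar to have uniformly positive $\ov g$-width so that the initial outer noncollapsing constant can be chosen uniformly, and this requires $d_0$ to be chosen below the initial injectivity radius and below the curvature scale, after which everything is standard.
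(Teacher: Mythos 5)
Your skeleton (a two\-/point Andrews quantity for the inner bound, an outward collar $\ov\Psi$ with extension of $G_0$ for the outer bound) matches the paper's strategy in outline, but the proposal skips precisely the step that makes the argument nontrivial in the Alexandrov immersed setting. Your quantity $Z(x,y,t)$ is built from the Euclidean chord $G(x,t)-G(y,t)$ for arbitrary $y\in\ov\Omega$, and you assert that inner noncollapsedness is equivalent to $Z\le 0$ ``whenever this makes sense locally'' and that Andrews' computation ``goes through verbatim because it is purely local.'' Neither claim is justified as stated: since $G$ is only an immersion, the chord between $G(x)$ and $G(y)$ need not lie in the image of $G$, the denominator-type quantity can degenerate at self-overlaps ($G(x)=G(y)$ with $x\neq y$), and the Euclidean chord condition is not obviously equivalent to the geodesic-ball condition of Definition~\ref{def:innernoncollapsed}. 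The paper resolves this by maximising only over the \emph{visible set} $V(x,t)$: Lemma~\ref{lem:noselfint} shows there are no self-intersections there, Lemma~\ref{lem:ZtoIntSphere} establishes the equivalence with the inscribed geodesic ball, and --- crucially --- Lemma~\ref{lem:Regular} shows that a maximiser of $Z$ over $\partial V(x,t)$ is attained at a \emph{regular} boundary point (or $Z=0$), so that the first/second derivative tests used in the two-point argument are legitimate; Lemma~\ref{lem:continuity} supplies the continuity needed to run the comparison in time, and Propositions~\ref{prop:innerviscosity}, \ref{prop:outerviscosity} recast everything as viscosity sub/supersolution statements for the sphere curvatures $\ov Z$, $\un Z$. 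In the embedded case the $y$-variable ranges over a closed manifold with no boundary, so none of this arises; in your version the $y$-domain has an edge (what the paper calls $\partial V_{\mathrm{Sing}}$), and a maximiser sitting there would invalidate the derivative conditions. Saying the tangency is ``genuine'' by positivity of $\on{inj}(t)$ does not address this, and the grazing case does not reduce to Lemma~\ref{lem:Comparison}, which concerns avoidance of $N_t$, not the extremal structure of the two-point function. (Minor: your formula omits the factor $\alpha$, so as written it only encodes $1$-noncollapsedness.)

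There is also a gap in your outer construction. Extending $G_0$ to ``a large round ball'' is in general impossible; only the thin outward collar works, and that part of your proposal coincides with the paper's construction of $\ov\Psi$ and $\ov G$. But letting $N_t$ itself move by mean curvature creates an existence problem: Definition~\ref{def:AlexComp} requires the comparison parametrisation on all of $[0,T)$, and the collar boundary $N_0$, being close to $M_0$, may develop its own singularity before $T$, at which point the outer argument stops. The paper sidesteps this by choosing $\delta$ so small that $N_0$ is strictly mean convex and then taking $N_t=N_0$ \emph{stationary}, which satisfies the barrier inequality $\ip{\ddt{Y}}{\mu}=0\ge -H^N$ for all time; alternatively (as remarked after the theorem) one can use $M_0$ or $M_{t-\tau}$ as a dynamic comparison. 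You should either adopt the stationary choice or prove that your moving $N_t$ survives as long as $M_t$ does.
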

\begin{proof}
Inner noncollapsing follows from Proposition \ref{prop:innerviscosity} and Corollary \ref{cor:innernoncollapsed} below. 

For outer noncollapsing, we first require a suitable comparison solution which we now construct. For $X_0=G|_{\partial \ov{\Omega}}$, we consider $Y:\partial \ov{\Omega} \times(-\delta, \delta]\ra \bb{R}^{n+1}$ given by 
\[Y(x,\rho) = X_0(x)+\rho \nu(x)\ .\]
For $\delta$ sufficiently small, for $-\delta<s\leq 0$ we identify $(x,s)$ with $\gamma_0(x,-s)\in \ov\Omega$ (where $\gamma$ is the geodesic mapping from the beginning of Section 2). This defines a smooth manifold with boundary $\ov{\Psi}$. We define the extension $\ov{G}:\ov{\Psi} \ra \bb{R}^{n+1}$ using the construction in equation \eqref{eq:Gmod} by $\ov{G}(y) = G(y,0)$ for $y\in \ov{\Omega}$ and $\ov{G}(x,\rho)=Y$ for $(x,\rho) \in \partial \ov\Omega \times (0,\delta]$. Furthermore, as $H>0$ on $M_0$, by restricting $\delta$ further we may assume that $N_0$ has positive mean curvature. As a result, by choosing $N_t=N_0$, the comparison equation of Definition \ref{def:AlexComp}(\ref{barrier3}) is satisfied. We therefore have a suitable Alexandrov comparison solution.

Outer noncollapsing with respect to $N_t$ now follows from Proposition \ref{prop:outerviscosity} and Corollary \ref{cor:outernoncollapsed} below.
\end{proof}
\begin{remark}
As with embedded MCF, if we start with $M_0$ only weakly mean convex, the strong maximum principle implies that the flow immediately becomes strictly mean convex, and, by Proposition \ref{prop:PresAlexImmersed}, remains Alexandrov immersed and so we can apply Theorem \ref{thm:noncollapsing}. 
\end{remark}
\begin{remark}
An alternative method to get a comparison solution in the above proof would be to run the flow for some small time $0<\tau$ strictly before the first singular time and then use $M_0$ as the required comparison solution for outer noncollapsing (or even $M_{t-\tau}$ for a dynamic comparison solution).
\end{remark}

Rather than attacking $\alpha$-noncollapsedness directly as in the work of Andrews \cite{AndrewsNoncollapsing}, we instead follow ideas of Andrews--Langford--McCoy \cite{AndrewsLangfordMcCoy} and consider the ``interior and exterior sphere curvature'', that is functions $\ov{Z}(x,t)$, $\un{Z}(x,t)$ which are given by the principal curvature of the largest round sphere tangent to $x$ which is inside $\ov{\Omega}$ or $\ov{\chi}$ respectively. These functions are continuous but not smooth in general, and the aim is to show that these are a viscosity sub/super solutions of the evolution equation for $H$.

As in \cite{AndrewsLangfordMcCoy}  we require the double point function $Z:M^n\times M^n\times[0,T)\ra \bb{R} \cup \infty$ given by
\begin{equation}Z(x,y,t) = \frac{2\ip{X(x,t)-X(y,t)}{\nu(x,t)}}{\|X(x,t)-X(y,t)\|^2}\ .\label{eq:Zdef}
\end{equation}
A short calculation implies that this quantity is the principal curvature of the unique sphere tangent to $\partial M$ at $x$ going through $y$ with a positive sign if the sphere is on the opposite side of $M_t$ as the normal and a negative sign otherwise.  Note that at self-intersections of $X$, we get $Z=\infty$. In order to avoid that while maximising $Z$ over $y$ we work on the ``visible set''. We make this precise in the next section.  

\subsection{Inner noncollapsing via viscosity solutions}
To avoid overlapping regions when maximising $Z$ in \eqref{eq:Zdef}, we consider all points which are visible from a point $x\in M^n$ inside $\ov \Omega$. Specifically, we define the \emph{visible set} for $x\in M^n$ at time $t$ to be
\[V(x,t):=\{y\in \ov{\Omega}: \exists \text{ a geodesic (w.r.t.\ $\ov{g}$), }\gamma:[0,1]\ra \ov{\Omega} \text{, with } \gamma(0)=x, \ \gamma(1)=y \}\ .\]
We may identify $V(x,t)$ with $G(V(x,t))$ - by definition $V(x,t)$ is isometric to a starshaped region of Euclidean space. Furthermore, by convexity, any curvature ball $B^x_r$ at $x$ must be contained in $V(x,t)$.

\begin{lemma}\label{lem:noselfint}
 Let $X:\partial\ov{\Omega}\times [0,T)\to\mathbb{R}^{n+1}$ be Alexandrov immersions and $x\in \partial \ov{\Omega}$. Then there are no points $y\in V(x,t)\cap \partial\ov{\Omega}$ with $y\not =x$ such that $X(x,t)=X(y,t)$, i.e.\ there are no self-intersections of $X$ restricted to the visible set.
\end{lemma}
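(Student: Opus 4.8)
The statement says that if $y \in V(x,t) \cap \partial\ov\Omega$ with $y \neq x$, then $X(x,t) \neq X(y,t)$. The natural approach is to suppose for contradiction that $X(x,t) = X(y,t)$ with $y$ visible from $x$, and exploit the immersion $G$ on $\ov\Omega$ together with the geodesic $\gamma \subset \ov\Omega$ joining $x$ to $y$. The key point is that $V(x,t)$ is, by the remark just before the lemma, isometric (via $G$) to a starshaped domain $U \subset \bb R^{n+1}$ with center corresponding to $x$; in particular $G$ restricted to $V(x,t)$ is injective. So the composition $G \circ \gamma$ is an honest embedded curve in $\bb R^{n+1}$ from $G(x,t)$ to $G(y,t)$, and if $X(x,t) = X(y,t)$ then — since $G|_{\partial\ov\Omega}$ parametrizes $\on{Im}(X)$ — we have $G(x,t) = G(y,t)$, contradicting injectivity of $G$ on the visible set. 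That already gives the result, provided one is careful that $x$ itself is in $V(x,t)$ and that the identification of $V(x,t)$ with its image is genuinely a homeomorphism onto a starshaped set, not merely a surjection.

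First I would recall precisely why $G|_{V(x,t)}$ is injective: a point $y \in V(x,t)$ is reached from $x$ by a $\ov g$-geodesic $\gamma:[0,1]\to\ov\Omega$, and since $\ov g$ is the pullback of the Euclidean metric under the immersion $G$, the curve $G\circ\gamma$ is a geodesic (hence a straight line segment) in $\bb R^{n+1}$ of the same length. Thus $G$ maps $V(x,t)$ onto the union of these segments emanating from $G(x,t)$, which is starshaped about $G(x,t)$; and the exponential-map description shows this is a diffeomorphism onto its image away from the cut locus — but the very definition of $V(x,t)$ (all points joined to $x$ by a geodesic staying in $\ov\Omega$) together with the curvature bound keeps us inside the region where $G$ is a local diffeomorphism, and the starshaped image then forces global injectivity. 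This is the step I expect to require the most care, since one must rule out two distinct geodesics from $x$ landing at the same point of $\ov\Omega$ while $G$ nonetheless identifies them — but if $G(\gamma_1(1)) = G(\gamma_2(1))$ and both segments $G\circ\gamma_i$ start at $G(x,t)$, a short argument with the curvature bound $|A|^2 \le C_{A,0}$ (as in the proof of Lemma \ref{lem:canonicalconstruction}) shows the segments cannot meet except at their common endpoint, forcing $\gamma_1(1) = \gamma_2(1)$ in $\ov\Omega$ as well.

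Second, with injectivity of $G$ on $V(x,t)$ in hand, the conclusion is immediate: if $y \in V(x,t)\cap\partial\ov\Omega$ and $X(x,t) = X(y,t)$, then because $G(\cdot,t)|_{\partial\ov\Omega}$ parametrizes $\on{Im}(X(\cdot,t))$ we have $G(x,t)$ and $G(y,t)$ both lying over the same point of $\on{Im}(X)$; but $x \in V(x,t)$ trivially (via the constant geodesic, or any short inward geodesic and back — more carefully, $x \in \partial V(x,t) \subset \ov{V(x,t)}$, and one works with a neighborhood), so both $x$ and $y$ lie in the set on which $G(\cdot,t)$ is injective, giving $x = y$, a contradiction.

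The main obstacle, to reiterate, is the injectivity-of-$G$-on-$V(x,t)$ claim: the paper asserts ``$V(x,t)$ is isometric to a starshaped region of Euclidean space,'' and I would want to pin down that this isometry is exactly (the restriction of) $G$, and handle the boundary case $x \in \partial\ov\Omega$ — where the geodesics defining $V(x,t)$ may run along or be tangent to $\partial\ov\Omega$ — so that $x$ and nearby boundary points are legitimately covered by the injectivity statement. Everything else is formal.
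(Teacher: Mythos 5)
Your proposal lands on the same key fact as the paper --- since $\ov{g}$ is the pullback of the Euclidean metric under the immersion $G$ (which is a local isometry, as $\ov{\Omega}$ has dimension $n+1$), any $\ov{g}$-geodesic $\gamma$ from $x$ to $y$ is mapped by $G$ to a straight line segment in $\bb{R}^{n+1}$ of the same length --- but you then take an unnecessary detour, and the detour is where your write-up is weakest. The paper's proof stops essentially where your second paragraph begins: if $y\neq x$, the connecting geodesic has positive length, so $G\circ\gamma$ is a straight segment of positive length joining $X(x,t)$ to $X(y,t)$, and a nondegenerate Euclidean segment has distinct endpoints; hence $X(x,t)\neq X(y,t)$. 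No global injectivity of $G$ on $V(x,t)$ is needed for the lemma.

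If you do want that injectivity statement (it is what underlies the remark that $V(x,t)$ is isometric to a starshaped region), your justification of it does not work as written. ``Local diffeomorphism with starshaped image'' does not by itself force global injectivity, and the curvature bound $|A|^2\leq C_{A,0}$ is irrelevant here: it controls the hypersurface $M_t$, not interior geodesics of $(\ov{\Omega},\ov{g})$; moreover, two straight segments sharing both endpoints coincide, so the assertion that ``the segments cannot meet except at their common endpoint'' is not the right mechanism. The correct argument is softer: if $G(y_1)=G(y_2)=q$ with $y_1,y_2\in V(x,t)$, the two geodesics from $x$ are mapped to the same segment from $G(x)$ to $q$, hence have the same length and the same initial velocity at $x$ (because $DG_x$ is injective), and uniqueness for the geodesic ODE gives that the geodesics coincide, so $y_1=y_2$. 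But for the lemma as stated you can skip all of this and argue directly as above, exactly as the paper does.
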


\begin{proof}
 Using the canonical charts we see that geodesics $\gamma$ with respect to $\bar g$ are straight lines in $\mathbb{R}^{n+1}$. On the other hand, by using the definition of $\bar g$ as the pullback metric of the Euclidean metric on the domain $\ov{\Omega}$, we see that we need to have a geodesic with positive length in order to connect a point $y\in\partial\ov{\Omega}$ with $x\not =y$. But by going back to the image we see that $X(y,t)$ cannot be connected to $X(x,t)=X(y,t)$ via a straight line of positive length. So $X(x,t)=X(y,t)$ is not possible for $y\not =x$ on the visible set.
\end{proof}

We may now define the \emph{inscribed interior sphere curvature at $(x,t)$} to be
\[\ov{Z}(x,t):=\sup\{Z(x,y,t):y\in \partial V(x,t)\cap \partial \ov{\Omega}, \  y\neq x\}\ .\]

Our aim below is to show that we can follow Andrew--Langford--McCoy's analysis \cite{AndrewsLangfordMcCoy} restricting to the visible region. In particular, this will require checking that there are no problems with ``boundary points'' in the maximum principle.

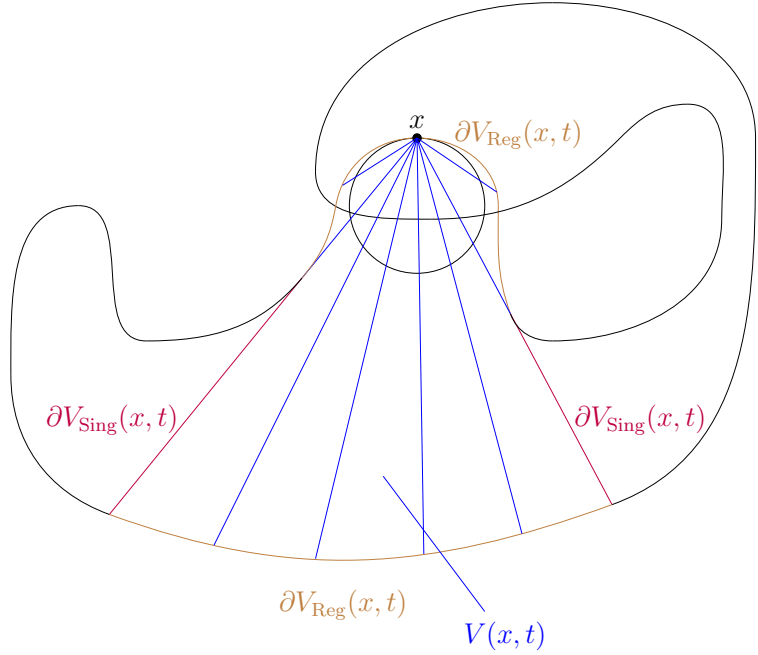
\begin{figure}
\begin{center}
\scalebox{0.9}{ 
   \begin{tikzpicture}
\draw (2,1) circle (1cm);
\draw[out=0, in=90, color=brown] (2,2) to (3.2,1);
\draw[out=-90, in=110, color=brown] (3.2,1) to (3.38,-0.6);
\draw[color=purple](3.38,-0.6) to (4.88,-3.42);
\draw[color=blue] (2,2) to (3.38,-0.6);
 \draw[out=-70, in=180] (3.38,-0.6) to (4,-1);
\draw[out=0, in=-90] (4,-1) to (6.5,0.8);
\draw[out=90, in=0] (6.5,0.8) to (6,2.5);
\draw[out=180, in=45] (6,2.5) to (5,2);
\draw[out=-135, in=0] (5,2) to (2.2,0.8);
\draw[out=180, in=-90] (2.2,0.8) to (0.5,1.5);
\draw[out=90, in=180] (0.5,1.5) to (4,4);
\draw[out=0, in=90] (4,4) to (7,2);
\draw[out=-90, in=20] (7,2) to (4.88,-3.42);
\draw[out=-160, in=-20, color=brown] (4.88,-3.42) to (-2.545,-3.565);

\draw[out=160, in=-90] (-2.545,-3.565) to (-4,-1.5);
\draw[color=purple] (0.3,-0.07) to (-2.545,-3.565);
\draw[color=blue] (0.3,-0.07) to (2,2);
\draw[out=90, in=180] (-4,-1.5) to (-3,1);
\draw[out=0, in=180] (-3,1) to (-2,-1);

\draw[out=0, in=-130] (-2,-1) to (0.3,-0.07);
\draw[out=50, in=-100, color=brown] (0.3,-0.07) to (0.8,1);
\draw[out=80, in=180, color=brown] (0.8,1) to (2,2);
\coordinate (a) at (2,2);
	  \fill (a) circle (2pt);
   \coordinate[label= above:$x$] (a) at (2,2);
      \coordinate[label= below:{{\color{purple}$\partial V_{\text{Sing}}(x,t)$}}] (b) at (-2.5,-1.8);
         \coordinate[label= below:{{\color{purple}$\partial V_{\text{Sing}}(x,t)$}}] (e) at (5.3,-1.8);
       \coordinate[label= left:{{\color{brown}$\partial V_{\text{Reg}}(x,t)$}}] (c) at (2,-4.9);
          \coordinate[label= above:{{\color{brown}$\partial V_{\text{Reg}}(x,t)$}}] (f) at (3.5,1.7);
       \draw[color=blue] (2,2) to (0.9,1.3);
          \draw[color=blue] (2,2) to (3.18,1.2);
          \draw[color=blue] (2,2) to (-1,-4.02);
          \draw[color=blue] (2,2) to (0.5,-4.22);
          \draw[color=blue] (2,2) to (2.1,-4.15);
          \draw[color=blue] (2,2) to (3.55,-3.85);
          \draw[color=blue] (1.5,-3) to (3,-5);
           \coordinate[label=below:{{\color{blue}$ V(x,t)$}}] (d) at (3.3,-5);
  \end{tikzpicture}

}

\caption{The visible set of a point $x\in M_t$.}
\end{center}
\end{figure}

\begin{lemma} \label{lem:ZtoIntSphere}
There exists an interior sphere with principal curvature $\kappa$ contained in $\ov{\Omega}$ tangent to $x\in M_t$ iff
\[ \sup_{y\in \partial V(x,t)\setminus\{x\}} Z(x,y,t)\leq \kappa\ .\]
\end{lemma}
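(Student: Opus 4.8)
\textbf{Proof proposal for Lemma \ref{lem:ZtoIntSphere}.}

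The plan is to prove both implications by unwinding the geometric meaning of $Z(x,y,t)$ established after equation \eqref{eq:Zdef}: for fixed $x$, the sphere $S_\kappa$ tangent to $\partial\ov\Omega$ at $x$ with principal curvature $\kappa$ (with the sign convention so that $\kappa>0$ corresponds to the sphere lying on the side into $\ov\Omega$) passes through a point $y$ precisely when $Z(x,y,t)=\kappa$, and lies strictly inside the ball bounded by $S_\kappa$ when $Z(x,y,t)<\kappa$. Working in the canonical charts of Lemma \ref{lem:canonicalconstruction}, everything in sight — geodesics, spheres, the metric — is Euclidean on $V(x,t)$, which by the remark preceding the lemma is isometric to a starshaped region of $\bb{R}^{n+1}$; so the whole argument reduces to an elementary Euclidean statement about when a ball centered on the inward normal line at $x$ is contained in a starshaped domain.

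For the forward direction, suppose there is an interior sphere of principal curvature $\kappa$, bounding a ball $B\subseteq\ov\Omega$ tangent at $x$. Since $B$ is convex and contains $x$, every point of $B$ is connected to $x$ by a straight segment inside $\ov\Omega$, so $B\subseteq V(x,t)$; in particular $\partial B\subseteq V(x,t)$. Now take any $y\in\partial V(x,t)\setminus\{x\}$. If $Z(x,y,t)>\kappa$ then the sphere through $y$ tangent at $x$ is \emph{strictly smaller} than $S_\kappa$, which forces $y$ to lie in the \emph{open} ball $B$ — but $B$ is an open subset of $\ov\Omega$, so in particular $y$ is an interior point of $V(x,t)$ (any short segment from $x$ through $y$ extends a little past $y$ inside $B\subseteq V$), contradicting $y\in\partial V(x,t)$. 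Hence $Z(x,y,t)\leq\kappa$ for all such $y$, giving the supremum bound. (One should also note $Z\le\kappa$ trivially at points $y$ where the segment $xy$ leaves $\ov\Omega$, but such $y$ are not in $V(x,t)$ at all, so do not arise.)

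For the converse, suppose $\sup_{y\in\partial V(x,t)\setminus\{x\}}Z(x,y,t)\leq\kappa$. Let $B$ be the open ball bounded by $S_\kappa$, tangent at $x$ on the inward side. I claim $\ov B\subseteq\ov\Omega$. Suppose not; since $x\in\partial\ov\Omega\cap\ov B$ and $\ov\Omega$ is closed, the set $\ov B\setminus\ov\Omega$ is nonempty and open, so we may pick $z\in B$ with $z\notin\ov\Omega$. The Euclidean segment from $x$ to $z$ starts in $\ov\Omega$ (indeed in $V(x,t)$, as a short initial piece of the inward normal-ish cone) and ends outside $\ov\Omega$; let $y$ be the first point along it that lies in $\partial V(x,t)$. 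Then $y\in\partial V(x,t)\setminus\{x\}$ and, since $y$ lies on the open segment from $x$ to $z$ with both $x,z\in\ov B$ and $B$ convex, $y\in B$, i.e. $y$ is strictly inside $S_\kappa$; by the geometric meaning of $Z$ this gives $Z(x,y,t)>\kappa$, contradicting the hypothesis. Therefore $\ov B\subseteq\ov\Omega$, and $S_\kappa=\partial B$ is the desired interior sphere tangent at $x$.

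\textbf{Main obstacle.} The routine estimates are genuinely routine once one is in the canonical charts; the only delicate point is the bookkeeping of \emph{which} points $y$ are legitimately on $\partial V(x,t)$ versus interior to it, and making sure that ``first exit point of the segment'' really lands on $\partial V(x,t)\setminus\{x\}$ rather than coinciding with $x$ or escaping detection because $V(x,t)$ need not be closed. Handling this cleanly requires the observation (from the paragraph before the lemma) that $V(x,t)$ is isometric to a \emph{starshaped} region with center $x$, so that $\partial V(x,t)$ is exactly the set of ``last visible points'' along rays from $x$; combined with Lemma \ref{lem:noselfint} this rules out the degenerate possibility that the relevant $y$ is a self-intersection point mapping back to $X(x,t)$. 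With starshapedness in hand, both first-exit arguments go through, and the sign conventions on $Z$ match up as claimed.
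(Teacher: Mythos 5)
Your proposal is correct and takes essentially the same route as the paper: both arguments reduce to the identification of $V(x,t)$ with a starshaped Euclidean region, use convexity to place any ball tangent at $x$ inside $V(x,t)$, translate containment of the ball into the inequality $Z(x,y,t)\leq\kappa$ over $\partial V(x,t)$, and invoke Lemma \ref{lem:noselfint} to rule out self-intersections where $Z$ would degenerate; you simply make explicit the first-exit-point argument that the paper's ``iff'' leaves implicit. One small caution on wording: in the converse, phrases like ``pick $z\in B$ with $z\notin\ov{\Omega}$'' and ``$\ov{\Omega}$ is closed'' must be read through the identification you set up at the start, i.e. as $z\notin V(x,t)$, since in the Alexandrov immersed setting $\ov{\Omega}$ is not a subset of $\bb{R}^{n+1}$ and containment of the Euclidean ball in the image $G(\ov{\Omega})$ would not suffice; with that reading your exit point lies on $\partial V(x,t)\cap\partial\ov{\Omega}$ by starshapedness and the argument goes through exactly as in the paper.
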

\begin{proof}
Any sphere in $\ov\Omega$ tangent to $x$ is contained in $V(x,t)$ by convexity. Therefore, there is a sphere of radius $r=\kappa^{-1}$ in $\ov{\Omega}$, tangent to $M_t$ at $x$ iff for all $y\in \partial V(x,t)$, 
\[\|X(y,t)-(X(x,t)-r\nu(x,t))\|^2 - r^2\geq 0\ ,\]
or equivalently
\[\|X(y,t)-X(x,t)\|^2+2r\ip{\nu(x,t)}{X(y,t)-X(x,t)}\geq 0\ .\]

By Lemma~\ref{lem:noselfint} we do not have self-intersections on the visible set, so for $x \neq y$ this is equivalent to 
\[Z(x,y,t) = \frac{2\ip{X(x,t)-X(y,t)}{\nu(x,t)}}{\|X(x,t)-X(y,t)\|^2}\leq \frac 1 r=\kappa\ .\]
In particular if there exists an inscribed curvature ball, $\sup_{y\in V(x,t)\setminus\{x\}} Z(x,y,t)\leq \kappa$.

Similarly if $\sup_{y\in V(x,t)\setminus\{x\}} Z(x,y,t)\leq \kappa$ then we have the required inequality for $x\neq y$, while for $x=y$ the required inequality holds trivially. Therefore there exists an inscribed ball.
\end{proof}

We begin by splitting $\partial V(x,t)$ into two sets. For $x\in M^n$ and $y\in V(x,t)$, let $\gamma_{x,y}:[0,1]\ra V(x,t)$ be the unit speed geodesic starting at $x$ and going to $y=\gamma_{x,y}(1)$. Let
\[\partial V_\text{Reg}(x,t) := \{y\in \partial V(x,t)\cap\partial\Omega: \text{$\gamma_{x,y}$ hits $\partial \Omega$ for the first time at $y$ and } \gamma'_{x,y}(1)\notin T\partial \Omega\}\ , \]
and
\[\partial V_\text{Sing}(x,t) := \partial V(x,t)\setminus \partial V_\text{Reg}(x,t)\ . \]
We note that (by inverse function theorem), $\partial V_\text{Reg}(x,t)$ is open. 

\begin{lemma}\label{lem:Regular}
Suppose that $x,y\in M^n$, $x\neq y$ are such that
\[\max_{ y'\in \partial V(x,t)\cap\partial\Omega}Z(x,y',t)=Z(x,y,t)\geq 0\ .\]
Then $y\in \partial V_\text{Reg}(x,t)$ or $Z(x,y,t)=0$.
\end{lemma}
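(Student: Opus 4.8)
The plan is to argue by contradiction: suppose $y\in\partial V_{\text{Sing}}(x,t)$ and $Z(x,y,t)>0$, and derive a contradiction with the maximality of $Z(x,y,t)$ over $\partial V(x,t)\cap\partial\Omega$. The starting observation is the characterisation of $\partial V_{\text{Sing}}$: if $y\notin\partial V_{\text{Reg}}(x,t)$, then either the geodesic $\gamma_{x,y}$ meets $\partial\Omega$ at some interior parameter $s_0<1$ before reaching $y$, or it reaches $y$ tangentially, i.e.\ $\gamma'_{x,y}(1)\in T_y\partial\Omega$. I would treat these two cases and show each is incompatible with $y$ being a maximiser with positive value.

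First I would rule out the case where $\gamma_{x,y}$ hits $\partial\Omega$ at an interior point $z=\gamma_{x,y}(s_0)$, $s_0\in(0,1)$. Since $\bar g$ is locally Euclidean (via the canonical charts), $\gamma_{x,y}$ is a straight line segment in a local chart, and $z,y$ both lie on it with $z$ between $x$ and $y$. If $z=\gamma_{x,y}(s_0)\in\partial\Omega$ lies in the interior of the segment, then locally $\Omega$ sits on one side of a hypersurface through $z$, and the segment from $x$ passes through $\partial\Omega$ into the exterior — but then $y=\gamma_{x,y}(1)$ would not be reachable by a geodesic staying in $\ov\Omega$, contradicting $y\in V(x,t)$; or, if the segment stays in $\ov\Omega$ past $z$, then $z$ itself is a point of $\partial V(x,t)\cap\partial\Omega$ closer to $x$ along the line, and a short computation (the function $r\mapsto \|X(y)-X(x)\|^2+2r\langle\nu(x),X(y)-X(x)\rangle$ is linear in $r$, exactly as in Lemma~\ref{lem:ZtoIntSphere}) shows $Z(x,z,t)\ge Z(x,y,t)$ with equality only in degenerate position; chasing the geometry of the tangent sphere through $x$ one sees $Z(x,z,t)> Z(x,y,t)$ unless $Z(x,y,t)=0$, contradicting maximality. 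The second case, $\gamma'_{x,y}(1)\in T_y\partial\Omega$, is the tangential one: here the inscribed sphere of curvature $Z(x,y,t)$ tangent to $M_t$ at $x$ passes through $y$, and the segment $\overline{X(x)X(y)}$ is a chord of that sphere; tangency $\gamma'_{x,y}(1)\in T_y\partial\Omega$ forces the sphere to be tangent to $M_t$ at $y$ as well (the chord direction is the sphere's radial direction at $y$, which must then be normal to $M_t$ at $y$ since otherwise one could push $y$ slightly along $\partial\Omega$ to increase $Z$). But a sphere tangent to $M_t$ at two points, lying locally inside $\ov\Omega$, can be enlarged: moving its centre away from $x$ along the radius keeps it tangent at $x$, touches $M_t$ at a nearby point, and the corresponding $Z$ value strictly decreases only if... — more carefully, I would instead argue that tangency at $y$ means $\langle\nu(y,t),X(x,t)-X(y,t)\rangle=0$ is impossible when $Z>0$ unless the sphere can be expanded, again contradicting that $Z(x,y,t)$ is the supremum.

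Here is the cleaner way to organise the tangential case, which I expect is what the authors do. If $\gamma'_{x,y}(1)\in T_y\partial\Omega$ and $Z(x,y,t)=\kappa>0$, consider the one-parameter family of spheres $S_\rho$ of curvature $\kappa$ tangent to $M_t$ at $x$ from the inside; there is exactly one, $S$, and $y\in S$. Parametrise points $y(s)$ on $\partial\Omega$ near $y$ with $y(0)=y$ and $y'(0)=\gamma'_{x,y}(1)$, the radial direction of $S$ at $y$. Then $s\mapsto\|X(y(s))-c\|^2$ (with $c$ the centre of $S$) has a critical point at $s=0$ of value $r^2$, and because $y'(0)$ is radial this critical point is degenerate in first order along that direction; expanding to second order and using that $\partial\Omega$ curves away from $S$ (since $S\subset\ov\Omega$), one finds points $y(s)$ with $\|X(y(s))-c\|^2<r^2$, i.e.\ $y(s)$ lies strictly inside $S$, which via the linear-in-$r$ identity of Lemma~\ref{lem:ZtoIntSphere} gives $Z(x,y(s),t)>\kappa$ — contradicting maximality. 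The interior-hit case reduces to this once one notes the hitting point $z$ must itself be in $\partial V(x,t)\cap\partial\Omega$ and applies the same expansion argument there, or directly contradicts $y\in V(x,t)$.

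The main obstacle I anticipate is the bookkeeping in the tangential case: making precise that $S\subset\ov\Omega$ forces $\partial\Omega$ to lie (locally, to second order) on the outside of $S$ near $y$, so that the perturbed points genuinely move strictly inside $S$ rather than staying on it — one must exclude the knife-edge possibility that $\partial\Omega$ agrees with $S$ to second order at $y$, which would correspond to $Z$ being attained along a whole arc. This is handled by a standard second-variation / osculating-sphere comparison, using the curvature bound \eqref{eq:Abound} to control the geometry, but it needs care. Everything else (straight-line geodesics in canonical charts from Lemma~\ref{lem:canonicalconstruction}, the linear dependence on $r$, absence of self-intersections on $V(x,t)$ from Lemma~\ref{lem:noselfint}) is already available.
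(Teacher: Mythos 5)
Your overall structure (reduce to two cases: the geodesic meets $\partial\Omega$ strictly before $y$, or it arrives at $y$ tangentially) matches the paper, and your first case is handled the same way: the hitting point lies on the chord closer to $x$, and $Z(x,\cdot,t)$ strictly increases along the chord towards $x$ when $Z>0$. The tangential case, however, contains a genuine error. The chord direction $X(x,t)-X(y,t)$ is \emph{not} the radial direction of the inscribed sphere $S$ at $y$ (that would only happen if $y$ were antipodal to $x$ on $S$), so tangential arrival of the geodesic does not force $S$ to be tangent to $M_t$ at $y$, and the function $s\mapsto\|X(y(s),t)-c\|^2$ is \emph{not} critical at $s=0$. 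Indeed, with $c=X(x,t)-Z^{-1}\nu(x,t)$ and $y'(0)=X(x,t)-X(y,t)$ one computes
\[
\frac{d}{ds}\Big|_{s=0}\|X(y(s),t)-c\|^2=2\,\langle X(x,t)-X(y,t),\,X(y,t)-c\rangle=2\Big(-d^2+Z^{-1}\tfrac{Z d^2}{2}\Big)=-d^2<0,
\]
equivalently $D^y_{X(x,t)-X(y,t)}Z=Z>0$. So the correct argument is purely first order, which is exactly what the paper does: since $X(x,t)-X(y,t)\in T_{X(y,t)}M_t$ in the tangential case, one projects the chord direction onto $M_t$ near $y$, checks that the perturbed points remain in $\partial V(x,t)$ (a verification you omit entirely, and without which there is no contradiction with maximality over $\partial V(x,t)\cap\partial\Omega$), and concludes $Z(x,a(s),t)>Z(x,y,t)$ for small $s$.

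Consequently the ``main obstacle'' you anticipate --- a degenerate second-order comparison between $\partial\Omega$ and the osculating sphere, the ``knife-edge'' case --- never arises: no second variation is needed, and the second-order argument you sketch cannot even begin because its premise (criticality of the distance to the centre at $s=0$) is false. Repairing your proof amounts to replacing the tangency/criticality claim with the derivative identity above and adding the argument that the projected curve stays in the visible boundary, at which point you recover the paper's proof.
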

\begin{proof}
We may identify $V(x,t)$ with its image under $G$ and work in $\bb{R}^{n+1}$. Suppose not, then (wlog) we may assume that $y\in \partial V_\text{Sing}(x,t)$ is a point such that the line from $x$ to $y$ does not intersect any points of $\partial \Omega$ (otherwise we simply observe that if $Z(x,y,t)\geq 0$, by passing to points on the line closer to $x$ we increase $Z(x,y,t)$) and such that $Z(x,y,t)>0$. In particular, $\ip{X(x,t)-X(y,t)}{\nu(x,t)}>0$.  We know that by definition of $\partial V_\text{Sing}(x,t)$ that $X(x)-X(y)\in T_{X(y)}M_t$.

But then, treating $x$ as a constant we have that 
\begin{flalign}
D{}^y_{X(x,t)-X(y,t)}Z(x,y,t) &= Z(x,y,t)>0\ .\label{eq:OK}
\end{flalign}
We take a unit speed geodesic, $\widetilde{\gamma}$ from $y$ to $x$ is in $\ov{\Omega}$, and in a neighbourhood of $y$ we may project this line in direction $\nu(y)$ to $M_t$ to get a smooth curve $\tilde{a}(s)$ in $M_t$. By compactness, we may see that for small $s$, $a(s)$ stays in $\partial V(x,t)$ (otherwise we either contradict that there are no other points on the line from $x$ to $y$ or that $\ip{X(y,t)-X(x,t)}{\nu(x)}<0$). Using \eqref{eq:OK}, for small $s$ we have that  $Z(x,a(s),t)>Z(x,y,t)$, a contradiction to the maximality of $Z(x,y,t)$.
\end{proof}

\begin{lemma}\label{lem:continuity}
$\ov{Z}(x,t)$ is continuous in time and space while and $\ov{Z}(x,t) \geq 0$ and the flow is smooth, that is, \eqref{eq:Abound} and \eqref{eq:nAbound} hold. 
\end{lemma}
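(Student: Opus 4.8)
The plan is to establish continuity of $\ov{Z}$ by exploiting the compactness of the set over which the supremum is taken, together with the smoothness of $Z$ away from self-intersections. First I would argue that when $\ov{Z}(x,t)\geq 0$, the supremum defining $\ov{Z}$ is attained. By Lemma~\ref{lem:Regular}, if the maximising $y$ is not in $\partial V_{\text{Reg}}(x,t)$ then $Z(x,y,t)=0$, so either way we are in good shape: either $\ov{Z}(x,t)=0$, or the maximiser $y$ lies in the open set $\partial V_{\text{Reg}}(x,t)$, where $\partial V(x,t)$ is locally a smooth hypersurface (a piece of $\partial\Omega$ met transversally by geodesics from $x$) and $Z$ is smooth there by Lemma~\ref{lem:noselfint} (no self-intersections on the visible set means the denominator $\|X(x,t)-X(y,t)\|^2$ stays bounded away from zero). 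The key point is that $\partial V(x,t)\cap\partial\Omega$ varies continuously (indeed, the visible set and its boundary depend continuously on $(x,t)$, since geodesics of $\ov g$ depend smoothly on initial data and on the metric, which itself varies smoothly in $t$ while \eqref{eq:Abound}, \eqref{eq:nAbound} hold), so the supremum of a jointly continuous function over a continuously varying compact set is continuous.

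The argument I would write has two halves. For \emph{upper semicontinuity}: given $(x_j,t_j)\to(x,t)$, let $y_j$ attain $\ov{Z}(x_j,t_j)$ (or approximately attain it); by compactness of $\partial\ov\Omega$ pass to a convergent subsequence $y_j\to y_\infty$. One checks $y_\infty\in\partial V(x,t)\cap\partial\Omega$ or $y_\infty=x$ — if $y_\infty=x$ then, because $\gamma'_{x_j,y_j}(1)$ has unit length and the geodesic lengths shrink to $0$, a Taylor expansion of $Z$ along short geodesics shows $Z(x_j,y_j,t_j)\to\kappa$ for some principal curvature $\kappa\leq H(x)$, which is still $\leq\limsup$; otherwise joint continuity of $Z$ gives $\limsup_j \ov{Z}(x_j,t_j)=\lim_j Z(x_j,y_j,t_j)=Z(x,y_\infty,t)\leq\ov{Z}(x,t)$. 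For \emph{lower semicontinuity}: if $\ov{Z}(x,t)>0$, take the maximiser $y\in\partial V_{\text{Reg}}(x,t)$; since $\partial V_{\text{Reg}}$ is open and varies continuously, for $(x',t')$ near $(x,t)$ there is a nearby $y'\in\partial V(x',t')\cap\partial\Omega$ with $Z(x',y',t')$ close to $Z(x,y,t)$, giving $\liminf \ov{Z}(x',t')\geq Z(x,y,t)=\ov{Z}(x,t)$. If $\ov{Z}(x,t)=0$ lower semicontinuity at the boundary of the region $\{\ov Z\geq 0\}$ is where the statement is only claimed "while $\ov Z\geq 0$", so this is the delicate case and I would handle it by the same perturbation argument applied to an approximate maximiser, noting $Z$ can only be pushed slightly negative.

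The main obstacle I anticipate is controlling the behaviour of the maximiser as it approaches $\partial V_{\text{Sing}}(x,t)$ — that is, ruling out that a sequence of maximisers $y_j\in\partial V_{\text{Reg}}(x_j,t_j)$ escapes to a singular boundary point where the geodesic from $x$ becomes tangent to $\partial\Omega$ or where $\partial V$ has a corner, causing a jump. Lemma~\ref{lem:Regular} is precisely the tool that prevents this when $Z\geq 0$: at a singular limit point the value must drop to $0$, so it cannot exceed a positive interior value. I would therefore lean on Lemma~\ref{lem:Regular} repeatedly, and phrase the continuity claim carefully so that it only asserts continuity on the (relatively open) set where $\ov Z>0$ together with one-sided control down to $\ov Z=0$, matching the hypothesis "$\ov Z(x,t)\geq 0$" in the statement. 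The remaining ingredients — smooth dependence of $\ov g$, its geodesics, and hence $V(x,t)$ on $(x,t)$ — are routine given \eqref{eq:Abound} and \eqref{eq:nAbound} and I would simply cite standard ODE-dependence results rather than belabour them.
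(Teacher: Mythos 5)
Your route is genuinely different from the paper's. You analyse the supremum directly: attainment, Lemma \ref{lem:Regular} to force maximisers into $\partial V_{\text{Reg}}$, openness of regular visibility for lower semicontinuity, and compactness of maximising sequences for upper semicontinuity. The paper instead never touches the structure of $V(x,t)$ at all: it uses Lemma \ref{lem:ZtoIntSphere} to translate bounds on $\ov{Z}$ into statements about inscribed balls tangent to the image, and then argues that a jump of $\ov{Z}$ along a spacetime sequence $(x_k,t_k)\ra(x_\infty,t_\infty)$ would force some surface point realising $\ov{Z}(x_\infty,t_\infty)$ (which exists unless $\ov{Z}_\infty=\kappa_{\on{max}}(x_\infty,t_\infty)$, a case excluded by continuity of $\kappa_{\on{max}}$) to lie strictly inside a ball that was disjoint from the surface an instant earlier; the curvature bound \eqref{eq:Abound} bounds the speed $|H|$, so the surface cannot cover that definite distance in vanishing time. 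That argument buys something your scheme has to work harder for: it needs no regularity of the visible set, and no time-regularity of $\ov{g}$, only boundedness of the flow speed.

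Two concrete soft spots in your version. First, the blanket claim that ``the visible set and its boundary depend continuously on $(x,t)$'' is false in general (visibility can jump when a geodesic grazes $\partial\Omega$ tangentially), so the principle ``sup of a continuous function over a continuously varying compact set'' is not available; your two-sided argument can avoid it, but then the real content of upper semicontinuity is the closedness statement ``$y_j\in\partial V(x_j,t_j)$, $y_j\ra y_\infty$ implies $y_\infty\in\partial V(x_\infty,t_\infty)$'', which you only assert (``one checks''). Proving it requires a limit-of-geodesics argument in $(\ov{\Omega},\ov{g}_t)$ with metrics converging in $t$ — and the paper has explicitly \emph{not} assumed any time-regularity of $G$ at this stage, so you must first note that $V$ and $\ov{Z}$ are independent of the choice of $G$ and invoke the locally-in-time smooth $G$ built in the proof of Proposition \ref{prop:PresAlexImmersed}. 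Second, in the case $y_j\ra x$ your chain ``$Z(x_j,y_j,t_j)\ra\kappa\leq H(x)$, which is still $\leq\limsup$'' is not the inequality you need: for upper semicontinuity you must conclude $\kappa\leq\kappa_{\on{max}}(x_\infty,t_\infty)\leq\ov{Z}(x_\infty,t_\infty)$ (the bound by $H$ is irrelevant without convexity), and the second inequality itself requires the observation — used by the paper inside Proposition \ref{prop:innerviscosity} — that nearby surface points in directions of positive normal curvature belong to $\partial V(x,t)$, so that $\ov{Z}\geq\kappa_{\on{max}}$ whenever $\kappa_{\on{max}}>0$; the same observation is also what rescues your lower-semicontinuity half in the case where the supremum is not attained at any $y\neq x$ but only as $y\ra x$, a case you currently omit. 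With these points repaired your argument goes through, but as written they are gaps rather than routine details.
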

\begin{proof} 
By Proposition \ref{prop:PresAlexImmersed}, the flow remains Alexandrov immersed, and so prior to the singular time, $\ov{Z}$ is bounded at every point.

We consider a spacetime sequence $(x_k, t_k)\ra (x_\infty, t_\infty)$ and we aim to show that \\
$\lim_{k\ra\infty}\ov{Z}(x_k, t_k)=\ov{Z}(x_\infty, t_\infty)=:\ov{Z}_\infty$. 

Suppose that $\liminf_{k\ra\infty}\ov{Z}(x_k, t_k) = \ov{Z}_\infty - 2\e$. Then there exists a subsequence $(x_{k(i)},t_{k(i)})$ such that $\lim\ov{Z}(x_{k(i)}, t_{k(i)})= \ov{Z}_\infty - 2\e$. In particular, we may find an interior ball of radius $\frac{1}{\ov{Z}_\infty - \e}$, tangent to $M_{t(i)}$ at $x_{k(i)}$, and otherwise disjoint from $M_{t_{k(i)}}$ by Lemma \ref{lem:ZtoIntSphere}. In this case we also know that $\ov{Z}_\infty$ is not $\kappa_{\on{max}}(x_\infty, t_{\infty})$ by continuity of $\kappa_{\on{max}}$, and so $\ov{Z}_\infty$ must be realised by $\ov{Z}_\infty=Z(x_\infty, y_\infty, t_\infty)$ for some $y_\infty\in M_{ t_\infty}$. In particular, $y_\infty$ is in the boundary of an inscribed ball of radius $\frac{1}{\ov{Z}_\infty}<\frac{1}{\ov{Z}_\infty-\e}$. However, for $i$ large enough, this contradicts the bound on the curvature as the manifold must move in at infinite speed to reach $y_\infty$, a contradiction. Therefore $\liminf_{i\ra\infty}\ov{Z}(x_k, t_k) \geq \ov{Z}_\infty$.

Now suppose that $\limsup_{k\ra\infty}\ov{Z}(x_k, t_k) = \ov{Z}_\infty + 2\e$. Then there exists a subsequence $(x_{k(i)},t_{k(i)})$ such that $\lim\ov{Z}(x_{k(i)}, t_{k(i)})= \ov{Z}_\infty + 2\e$. In particular, we may find points $y_i\in M_{t_{k(i)}}$ inside an interior ball of radius $\frac{1}{\ov{Z}_\infty + \e}$ tangent to $x_{k(i)}$. Taking a further subsequence, the $y_i$ converge to $y\in M_{t_{\infty}}$ which is inside the inscribed ball of radius $\frac{1}{\ov{Z}_\infty+\e}$ tangent to $x_\infty$. But this contradicts the definition of $\ov{Z}(x_\infty, t_\infty)$. Therefore $\limsup_{k\ra \infty} \ov{Z}(x_k, t_k)\leq \ov{Z}_\infty$. The claim now follows.

\end{proof}

 We recall that a continuous function $f:M^n\times[0,T)\ra \bb{R}$ is a viscosity subsolution of $\ho f = F(x,t,f,\n f)$ if for every $(x_0,t_0)\in M^n\times[0,T)$ and every $C^2$ function $\phi$ on $M\times [0,T)$ such that $\phi(x_0,t_0)=f(x_0,t_0)$ and $\phi\geq f$ for $x$ in a neighbourhood of $x_0$ and $t\leq t_0$, then $\ho \phi \leq F(x,t,\phi,\n \phi)$ at $(x_0, t_0)$. In \cite[Definition 9.5, p287]{AndrewsChowGuentherLangford} such solutions are alternatively called barrier subsolutions.

In the proposition below, we work in orthonormal coordinates at a point such that $g_{ij}=\delta_{ij}$ and $h_{ij}=\kappa_i \delta_{ij}$.
\begin{proposition}\label{prop:innerviscosity}
While $\ov{Z}\geq 0$, $\ov{Z}$ satisfies 
\[\ho \ov{Z}\leq |A|^2 \ov{Z}-2\sum_{\kappa_i<\ov{Z}} \frac{(\n_{e_i} \ov{Z})^2}{\ov{Z}-\kappa_i}\]
in the viscosity sense.
\end{proposition}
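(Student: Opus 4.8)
The plan is to establish the viscosity inequality by testing against an arbitrary admissible $C^2$ function $\phi$ at a point $(x_0,t_0)$ where $\ov Z\geq 0$, and reducing everything to a computation for the double-point function $Z(x,y,t)$ at the pair $(x_0,y_0)$ realizing $\ov Z(x_0,t_0)$. First I would dispose of the trivial case: if $\ov Z(x_0,t_0)=\kappa_{\max}(x_0,t_0)$, then the inscribed sphere touches $M_{t_0}$ to second order at $x_0$ and the inequality follows from the standard evolution equation $\ho H=|A|^2H$ together with the fact that $\kappa_{\max}$ is a viscosity subsolution of the same equation (so the bad gradient term, which is nonpositive, only helps); here one notes $\phi\geq \ov Z\geq \kappa_{\max}$ near $x_0$ and $\phi(x_0,t_0)=\kappa_{\max}(x_0,t_0)$. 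So assume $\ov Z(x_0,t_0)>\kappa_{\max}(x_0,t_0)$, whence by Lemma~\ref{lem:ZtoIntSphere} and Lemma~\ref{lem:Regular} the supremum defining $\ov Z$ is attained at some $y_0\in\partial V_{\mathrm{Reg}}(x_0,t_0)$ with $y_0\neq x_0$, so near $(x_0,t_0)$ we may work with $Z(x,y,t)$ as a genuine smooth function, and the constraint $y\in\partial V(x,t)\cap\partial\Omega$ is, in the regular regime, just $y\in M_t$ with the spatial maximality condition $D^y Z(x_0,y_0,t_0)=0$ in directions tangent to $M_{t_0}$ at $y_0$.

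The heart of the argument is then the computation of $\ho$ applied to $\ov Z$, realized through $Z(x,y(x,t),t)$ where $y(x,t)$ is the smooth maximizing point (which exists near $(x_0,t_0)$ by the implicit function theorem, using that $y_0$ is regular and that $Z$ restricted to $M_t$ in the $y$-variable has a nondegenerate max — or, if degenerate, one perturbs as in Andrews--Langford--McCoy). One computes the time derivative and the Laplacian of $Z$ in the $x$-variable, using the mean curvature flow equations $\partial_t X=-H\nu$ at both $x$ and $y$, the formulas for $\partial_t\nu$, and Simons-type identities; the $y$-maximality kills all first-order terms coming from the $y$-dependence, while the second-order $y$-variation contributes a favorable (negative) term. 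After a somewhat lengthy but routine manipulation — the same one carried out in \cite{AndrewsLangfordMcCoy} for the embedded case — one arrives at
\[
\ho Z \leq |A|^2 Z - 2\sum_{\kappa_i<Z}\frac{(\n_{e_i}Z)^2}{Z-\kappa_i}
\]
at $(x_0,y_0,t_0)$, where $\kappa_i$ are the principal curvatures at $x_0$ and the sum only runs over indices with $\kappa_i<Z$ (these are exactly the directions in which the inscribed sphere is "interior"). Since $\phi$ touches $\ov Z=Z(\cdot,y(\cdot,\cdot),\cdot)$ from above at $(x_0,t_0)$ with equality, we have $\n\phi=\n\ov Z$, $\Delta\phi\geq\Delta\ov Z$ and $\partial_t\phi\leq\partial_t\ov Z$ there, so the differential inequality transfers to $\phi$, which is precisely the viscosity subsolution property claimed.

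The main obstacle I anticipate is \emph{not} the core evolution computation — that is essentially \cite{AndrewsLangfordMcCoy} verbatim once one works inside $G(V(x,t))\subset\bb R^{n+1}$, where geodesics are straight lines and $\ov Z$ coincides with the usual inscribed-sphere curvature — but rather ruling out that the maximizing $y_0$ escapes to the "singular" part $\partial V_{\mathrm{Sing}}(x_0,t_0)$ of the boundary of the visible set, i.e. that the obstruction to enlarging the inscribed sphere comes from a genuine interior contact point on $M_t$ rather than from the shadow boundary of the star-shaped domain $V(x,t)$. This is exactly what Lemma~\ref{lem:Regular} is for: when $\ov Z>0$ the maximizer is automatically regular, so the $y$-variation used above is along an honest normal variation of $M_t$ near $y_0$ and the maximum-principle argument has no "boundary terms". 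One should also check continuity/semicontinuity of the maximizing point and handle, by the standard $\varepsilon$-perturbation of the test function, the possibility that the spatial maximum over $y$ is attained on a set rather than at an isolated point; these are the same technical points as in the embedded setting and require only the visibility and no-self-intersection results (Lemma~\ref{lem:noselfint}) already established. Finally, throughout one uses Lemma~\ref{lem:continuity} to know $\ov Z$ is continuous, which is needed even to make sense of the viscosity formulation.
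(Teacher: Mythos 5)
Your overall skeleton matches the paper's: the same case split (if $\ov{Z}(x_0,t_0)=\kappa_{\max}(x_0,t_0)$, quote the known viscosity subsolution property of $\kappa_{\max}$ from Andrews--Chow--Guenther--Langford; otherwise the supremum is attained at some $y_0\neq x_0$, which Lemma \ref{lem:Regular} places in $\partial V_{\text{Reg}}(x_0,t_0)$ when $\ov{Z}>0$), the same reliance on Lemmas \ref{lem:noselfint}, \ref{lem:ZtoIntSphere} and \ref{lem:continuity}, and the same core Andrews--Langford--McCoy computation. The gap is in the mechanism you propose for the main case: you want a smooth maximizing map $y(x,t)$ via the implicit function theorem and then to transfer derivatives through $\n\phi=\n\ov{Z}$, $\Delta\phi\geq\Delta\ov{Z}$, $\partial_t\phi\leq\partial_t\ov{Z}$. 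The nondegeneracy needed for the implicit function theorem cannot be guaranteed, and, more seriously, comparing second and time derivatives of $\phi$ with those of $\ov{Z}$ presupposes that $\ov{Z}$ is twice differentiable at $(x_0,t_0)$ --- precisely what is unknown and what the viscosity formulation is designed to avoid. Your fallback (``perturb as in Andrews--Langford--McCoy'') is not a patch on this argument; it \emph{is} the argument, and it is what the paper runs.

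Concretely, the paper never constructs $y(x,t)$: since $\partial V_{\text{Reg}}$ is open and $y_0$ is regular, the inequality $Z(x,y,t)\leq\ov{Z}(x,t)\leq\phi(x,t)$ holds for all $(x,y,t)$ in a full neighbourhood of $(x_0,y_0,t_0)$ with $t\leq t_0$, so $\phi(x,t)-Z(x,y,t)$ has a local minimum there as a function of \emph{all} variables jointly. One then uses the vanishing of the first $x$- and $y$-derivatives, the nonnegativity of the Hessian in the combined directions $\partial_{x^i}+\partial_{y^i}$, and $\partial_t(\phi-Z)\leq 0$, and finally makes a careful choice of orthonormal frames at $x_0$ and $y_0$ (matching $w^\perp\cap T_{x_0}M$ with $w^\perp\cap T_{y_0}M$ and using $\nu(y_0,t_0)=\nu(x_0,t_0)-Zdw$ in the $w$-direction) so that the remaining bracketed term becomes exactly $-2\sum_{\kappa_i<\phi}(\n_{e_i}\phi)^2/(\phi-\kappa_i)$; this frame choice is where the gradient term comes from and is not routine bookkeeping --- without it you only get $\ho\phi\leq|A|^2\phi$. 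Two smaller omissions: when $\ov{Z}(x_0,t_0)=0$ Lemma \ref{lem:Regular} gives no regularity of $y_0$, but then $(x_0,t_0)$ is a spacetime minimum of $\phi$ and the inequality is immediate; and in the $\kappa_{\max}$ case the relevant input is the viscosity subsolution inequality for $\kappa_{\max}$ itself (with its gradient term), not the evolution equation of $H$.
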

\begin{proof}
This proof is only a small modification of \cite[Proof of Theorem 2]{AndrewsLangfordMcCoy}. For the convenience of the reader we provide the proof here. 

As $V(x,t)$ is compact so is $V(x,t)\cap M_t$ and so we have that either $\ov{Z}(x,t)$ is realised by $Z(x,y,t)$ for some $y\in (\partial V(x,t)\cap M_t)\setminus B_\e(x)$ (for some $\e>0$) or there exists a sequence $y_i\in V(x,t)$, $y_i\ra x$ such that $\ov{Z}(x,t)=\lim_{i\ra\infty} Z(x,y_i,t)$. In the latter case, applying Taylor's theorem, we have that 
\[\ov{Z} = h|_{(x,t)} (v,v)\]
for some unit vector $v\in T_xM_t$. We note that for unit vectors $v\in T_p M$ such that $ h|_{(x,t)} (v,v)>0$, we have that $\operatorname{exp}_{x}(\lambda v)\in V(x,t)$ for $\lambda$ sufficiently small. Writing $\kappa_\text{max}(x,t)$ for the maximum principal curvature at $x\in M_t$, we either have that $\ov{Z}(x,t)=\kappa_{\text{max}}(x,t)$ or there exists $x\neq y\in V(x,t)$ such that $\ov{Z}(x,t)=Z(x,y,t)$, or both. 

Pick $t_0\in[0,T)$ and $x_0\in M_{t_{0}}$. We now suppose that we have a $C^2$ function $\phi(x,t)$ such that in a neighbourhood of $(x_0,t_0)$ for $t\leq t_0$, $\ov{Z}(x,t)\leq \phi(x,t)$ with equality at $(x_0,t_0)$. 

As a supremum is taken we know that for $(x,t)$ as above, for any unit vectors $v\in T_xM_t$, 
\begin{equation} h(v,v)|_{(x,t)}\leq \ov{Z}(x,t)\leq \phi(x,t)\label{eq:nearx}\end{equation}
and for all $y\in M_t\cap \partial V(x,t)$
\begin{equation}Z(x,y,t)\leq \ov{Z}(x,t)\leq \phi(x,t)\ .\label{eq:farfromx}\end{equation}

Suppose that, at $\ov{Z}(x_0,t_0)=\kappa_{\on{max}}(x_0,t_0)=h(v_0,v_0)|_{(x_0,t_0)}$ for some unit vector $v_0\in T_{x_0}M_{t_0}$. In this case we may immediately apply the known standard viscosity subsolution equation \cite[Proposition 12.9, equation (12.17)]{AndrewsChowGuentherLangford} for $\kappa_{\on{max}}$ (which requires no assumptions on embeddedness) to see that $\phi$ satisfies 
\[\ho \phi \leq |A|^2\phi -2\sum_{\kappa_i<\phi} \frac{(\n_{e_i} \phi)^2}{\phi-\kappa_i}\]

Suppose now that $\kappa_{\text{max}}<\ov{Z}(x_0,t_0)=Z(x_0,y_0,t_0)$ for some $x_0\neq y_0$. If $\ov{Z}=0$, then by assumption we are at a minimum and the required equation follows immediately from properties of $\phi$ at a space-time minimum. Otherwise, applying Lemma \ref{lem:Regular} we know that $y_0\in \partial V_\text{Reg}(x_0,t_0)$ and inverse function theorem implies that we also have that there exists $\e, \delta>0$ such that if $x\in B_\e(x_0)$ then all $y\in B_{\delta}(y_0)\cap M_t$ satisfy that $y\in \partial V_\text{Reg}(x,t)$ for all $t_0-\e<t\leq t_0$. As a result, for such $x,y,t$, equation \eqref{eq:farfromx} holds, and furthermore, as the above sets are open, we may differentiate this inequality in time and space directions at $(x_0,y_0,t_0)$. 

Following \cite{AndrewsLangfordMcCoy}, by writing  $d=|X(x,t)-X(y,t)|$, $w=d^{-1}(X(x,t)-X(y,t))$ we have that at $x_0,y_0,t_0$, 
\begin{align*}
0=\pard{}{y^i}(\phi-Z) &= \frac{2}{d^2}\ip{\partial_i^y}{\nu(x_0,t_0) - d Z w}\\
0=\pard{}{x^i}(\phi-Z)&= \pard{\phi}{x^i} - \frac 2 d (h^{xp}_i\ip{w}{\partial^x_p}-Z\ip{w}{\partial^x_i})
\end{align*}
We note that the first of these implies that $\nu(x_0,t_0) - d Z w$ is orthogonal to $T_yM_t$, and a short calculation implies that this is a unit vector. Indeed, as the inscribed ball has center $X(x,t)-Z^{-1}\nu(x,t)=X(y,t)-Z^{-1}\nu(y,t)$, we may rearrange to get that 
\begin{equation}
\nu(y,t) = \nu(x,t)-Zdw\ .\label{eq:nuy}
\end{equation}  
Using this and the above identities we have that at $x_0,y_0,t_0$
\begin{align*}
\frac{\partial^2}{\partial y^i\partial y^j}(\phi-Z)&=\frac 2{d^2}(Z\delta_{ij}-h^y_{ij})\\
\frac{\partial^2}{\partial x^j\partial y^i}(\phi-Z)&=-\frac{2}{d^2}\left(Z\delta_j^p-h_j^{xp}\right)\ip{\partial^y_i}{\partial^x_p}- \frac 2 d \pard{\phi}{x^j}\ip{w}{\partial^y_i}\\
\frac{\partial^2}{\partial x^i\partial x^j}(\phi-Z)&=\frac{2}{d^2}(Z\delta_{ij}-h^x_{ij}) + Zh^x_{jp}\delta^{pq}h^x_{qi} - \frac 2 d\n_p h^x_{ij}\delta^{pq}\ip{w}{\partial^x_q}-Z^2 h^x_{ij} \\
&\qquad+ \frac 2 d \pard{\phi}{x^j}\ip{w}{\partial^x_i}+ \frac 2 d \pard{\phi}{x^i}\ip{w}{\partial^x_j}+\frac{\partial^2 \phi}{\partial x^i \partial x^j}\\
\pard{}{t}(\phi-Z) &=\pard{\phi}{t}+\frac{2H(x)}{d^2} - \frac{2H(y)}{d^2} - \frac{2}{d}\ip{w}{\n H(x)}-Z^2H(x)
\end{align*}
and so as $(x_0,y_0,t_0)$ is a minimum of $\phi - Z$, we have that at that point (after some substitution),
\begin{flalign*}
0&\leq -\pard{}{t} (\phi-Z)+g^{ij}\left(\pard{}{x^i}+\pard{}{y^i}\right)\left(\pard{}{x^j}+\pard{}{y^j}\right)(\phi-Z)\\
&=-\ho \phi +Z|A|^2 + \frac 4 d \delta^{ij}\pard{(\phi-Z)}{x^j}\ip{w}{\partial^x_i-\partial^y_i} \\
&\qquad +\frac{4}{d^2}\left(Z\delta^{jp}-h^{xpj}\right)[\delta_{jp}-\ip{\partial^y_j}{\partial^x_p}+2\ip{w}{\partial^x_j}\ip{w}{\partial^y_p-\partial^x_p}]\ .
\end{flalign*}
We now estimate the last term - we note that at this point all the above calculations are valid regardless of choice of parametrisation (and therefore $\partial^x_i$, $\partial^y_i$) and we now choose this to minimise the last term. We know that $Z>\kappa_{\text{max}}$ (as otherwise we are in the first case), and as in \cite[Lemma 6]{AndrewsLangfordMcCoy}, we now pick our coordinates so that we may see that the matrix in square brackets is the claimed nonpositive gradient term:

If $T_x M \perp w$, then using the above relations (and the fact that $\nu(x,t)-Zdw$ is a unit vector) then $T_y M \perp w$. In this case we pick $\partial_i^x=\partial^y_i$ for an orthonormal basis $\{\partial_i^x\}_{1\leq i \leq n}$ of $T_xM_t$. In this case, both the square bracket and $\pard{\phi}{x^i}=0$ and so the claimed equation holds. The same holds if $T_y M \perp w$.

Now we suppose that the projections of $w$ onto both $T_yM$ and $T_xM$ are nonzero. By choosing an orthonormal basis $\partial_1^x, \ldots, \partial_{n-1}^x$ spanning $w^\perp \cap T_x M_t$ and $\partial_i^y=\partial_i^x$ for $1\leq i \leq n-1$, we see that we only need to deal with $j=p=n$ and we may choose that 
\begin{align*}
\partial_n^x&=\frac{w-\ip{w}{\nu(x,t)}\nu(x,t)}{\sqrt{1-\ip{w}{\nu(x,t)}^2}}\\
\partial_n^y&=-\frac{w-\ip{w}{\nu(y,t)}\nu(y,t)}{\sqrt{1-\ip{w}{\nu(y,t)}^2}}=-\frac{(1-2\ip{\nu(x,t)}{w}^2)w+\ip{w}{\nu(x,t)}\nu(x,t)}{\sqrt{1-\ip{w}{\nu(x,t)}^2}}
\end{align*}
where we used \eqref{eq:nuy}. Writing $a=\ip{w}{\nu(x,t)}$, so that
\[\ip{\partial^y_n}{\partial^x_n} = \frac{-(1-2a^2)-a^2+a^2(1-2a^2)+a^2}{1-a^2} = 2a^2-1, \qquad \ip{\partial^x_i}{w}=\sqrt{1-a^2}=-\ip{\partial^y_i}{w}\]
\begin{flalign*}
1&-\ip{\partial^y_n}{\partial^x_n}+2\ip{w}{\partial^x_n}\ip{w}{\partial^y_n-\partial^x_n}=1+1-2a^2-4(1-a^2)=-2(1-a^2)<0
\end{flalign*}
and so we see that
\[\frac{4}{d^2}\left(Z\delta^{jp}-h^{xpj}\right)[\delta_{jp}-\ip{\partial^y_j}{\partial^x_p}+2\ip{w}{\partial^x_j}\ip{w}{\partial^y_p-\partial^x_p}] = -\frac{8}{d^2}(Z|w^\top|^2-h(w^\top,w^\top))\ .\]
As $Z>\kappa_\text{max}$, we have that
\[\frac{4}{d^2}(Z|w^\top|^2-h(w^\top,w^\top)) = -\frac{2}{d^2} \pard{\phi}{x_n} \ip{\partial_n}{w} = \sum_{i=1}^n\frac{\left(\n_{e_i}\phi\right)^2}{\phi-\kappa_i}\]
and so
\[\ho \phi \leq |A|^2 \phi-2\sum_{i=1}^n\frac{\left(\n_i\phi\right)^2}{\phi-\kappa_i}\ .\]
as required.
\end{proof}
\begin{cor}\label{cor:innernoncollapsed}
If $M_0$ is initially Alexandrov immersed and strictly mean convex, then there exists an $\alpha>0$ such that for all time, $M_t$ is  inner $\alpha$-noncollapsed.
\end{cor}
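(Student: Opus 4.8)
The statement is a standard maximum-principle consequence of Proposition~\ref{prop:innerviscosity}, following Andrews--Langford--McCoy \cite{AndrewsLangfordMcCoy}. The plan is to show that the ratio $W:=\ov{Z}/H$ has nonincreasing spatial maximum; then $\ov{Z}\leq\alpha^{-1}H$ on $[0,T)$ with $\alpha^{-1}:=\max_{M_0}(\ov{Z}/H)$, and the inscribed ball is produced by Lemma~\ref{lem:ZtoIntSphere}.

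First I would check that the relevant quantities behave well for all $t\in[0,T)$. Strict mean convexity forces $\kappa_{\on{max}}\geq H/n>0$, and (as seen in the proof of Proposition~\ref{prop:innerviscosity}, letting $y\to x$ along $M_t$ in the direction of maximal curvature) $\ov{Z}\geq\kappa_{\on{max}}$, so $\ov{Z}>0$ and the hypothesis ``$\ov{Z}\geq 0$'' of Proposition~\ref{prop:innerviscosity} holds throughout. Applying the weak maximum principle to the smooth evolution $\ho H=|A|^2H$ shows $\min_{M_t}H$ is nondecreasing, hence $H\geq\min_{M_0}H>0$ and strict mean convexity persists. By Lemma~\ref{lem:continuity}, $\ov{Z}$ is continuous on $M^n\times[0,T)$, so $\ov{Z}(\cdot,0)$ is bounded on the compact $M_0$ and $\alpha^{-1}:=\max_{M_0}(\ov{Z}/H)\in(0,\infty)$; in particular $M_0$ is inner $\alpha$-noncollapsed by Lemma~\ref{lem:ZtoIntSphere}.

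Next I would verify that $W=\ov{Z}/H$ is a viscosity subsolution of $\ho W\leq\frac{2}{H}\ip{\n H}{\n W}$. If $\phi$ is $C^2$ with $\phi\geq W$ near $(x_0,t_0)$ for $t\leq t_0$ and $\phi(x_0,t_0)=W(x_0,t_0)$, then, as $H>0$, $\psi:=\phi H$ is $C^2$, touches $\ov{Z}$ from above at $(x_0,t_0)$, so Proposition~\ref{prop:innerviscosity} bounds $\ho\psi$ there. Dividing by $H$, subtracting $\psi H^{-2}\ho H=|A|^2\psi H^{-1}$, and regrouping first-derivative terms via $\n\psi=H\n\phi+\phi\n H$, the $|A|^2$-terms cancel and one is left with
\[\ho\phi\leq\frac{2}{H}\ip{\n H}{\n\phi}-\frac{2}{H}\sum_{\kappa_i<\ov{Z}}\frac{(\n_{e_i}\psi)^2}{\ov{Z}-\kappa_i}\leq\frac{2}{H}\ip{\n H}{\n\phi}\]
at $(x_0,t_0)$, since the discarded sum is nonnegative. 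This step is a routine computation, essentially identical to the one in \cite{AndrewsLangfordMcCoy}.

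Finally I would invoke the parabolic comparison principle for viscosity (barrier) subsolutions (as in \cite{AndrewsChowGuentherLangford}): if $\max_{M_t}W$ were to strictly increase then at a spacetime maximum of $W-\e t$ (for $\e>0$ small) the spatially constant $C^2$ barrier $t\mapsto \mathrm{const}+\e t$ touches $W$ from above, and the subsolution inequality forces $\e\leq 0$, a contradiction; hence $t\mapsto\max_{M_t}W$ is nonincreasing, so $\ov{Z}(x,t)\leq\alpha^{-1}H(x,t)$ for all $t\in[0,T)$. Then Lemma~\ref{lem:ZtoIntSphere} provides, at each $x\in M_t$, an interior sphere of principal curvature $H(x,t)/\alpha$, i.e.\ of radius $\alpha/H(x,t)$, contained in $\ov{\Omega}$ and tangent at $x$: that is, $M_t$ is inner $\alpha$-noncollapsed. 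I expect the only genuine subtlety to be the bookkeeping in the viscosity maximum principle for the merely continuous $W$, making sure the barrier formalism of Proposition~\ref{prop:innerviscosity} feeds correctly into the comparison argument; there are no boundary difficulties, since $M^n$ is closed and $\ov{Z}$ is globally continuous while the flow is smooth.
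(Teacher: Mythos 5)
Your proposal is correct and follows essentially the same route as the paper: the paper's proof likewise passes to the quotient $\ov{Z}/H$ as a viscosity subsolution of the (drift) heat equation, bounds it at $t=0$ by compactness and Lemma~\ref{lem:continuity}, preserves the bound by the standard viscosity/barrier comparison principle, and concludes via Lemma~\ref{lem:ZtoIntSphere}. You have merely written out the quotient computation and the $\e t$-barrier argument that the paper delegates to the cited references.
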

\begin{proof}
We have that $\frac{\ov{Z}}{H}$ is a viscosity subsolution of the heat equation on $M_t$. By compactness, and Lemma \ref{lem:continuity} there exists an $0<\alpha$ such that $\frac{\ov{Z}}{H}(x,0)\leq \alpha^{-1}$ for all $x\in M^n$. As a result of standard viscosity solution/barrier solution comparison theorems (see e.g. \cite[Proof of Lemma 9.8, p290]{AndrewsChowGuentherLangford} or \cite[Theorem 3.3]{CrandallIshiiLions}), this is preserved for all time, which due to Lemma \ref{lem:ZtoIntSphere} is equivalent to $M_t$ being inner $\alpha$-noncollapsed. 
\end{proof}

\subsection{Outer noncollapsing estimates via viscosity solutions}

We now repeat the above argument, but for outer noncollapsing. In this section we will take $\ov{\Psi}$, $\ov{\chi}$ as in definitions \ref{def:AlexComp} and \ref{def:outernoncollapsed}.

We now define the \emph{outer visible set} at $x\in M^n$ at time $t$ (w.r.t.\ $\ov{\Psi}_t$) as follows:
\[W(x,t):=\{y\in \ov{\chi}: \exists \text{ a geodesic (w.r.t.\ $\ov{g}$), $\gamma:[0,1]\ra \ov{\chi}$, with $\gamma(0)=x, \ \gamma(1)=y$}\}\ .\]
Similarly to the previous section, we will consider
\[\un{Z}(x,t):=\inf\{ Z(x,y,t): y\in \partial W(x,t)\cap \partial \ov{\chi},\  y\neq x\}\ ,\]
where here we are extending the definition of $Z(x,y,t)$ to allow $y\in \partial \ov{\Psi}$.

We now consider the function $Z(x,y,t)$ as in \eqref{eq:Zdef}. As the normal is now pointing into $\ov{\chi}_t$, we may replace Lemma \ref{lem:ZtoIntSphere} by the statement that there exists an exterior sphere in $\ov{\chi}$ at $x$ with principal curvature $\kappa$ iff
\[\inf_{y\in \partial W(x,t)\setminus \{x\}} Z(x,y,t)\geq -\kappa\ .\]
As the proof is identical to Lemma \ref{lem:ZtoIntSphere}, we do not include it here. 

As previously, we split the boundary of $W(x,t)$. For $x\in M^n$ and $y\in V(x,t)$, let $\gamma_{x,y}:[0,1]\ra V(x,t)$ be the unit speed geodesic starting at $x$ and going to $y=\gamma_{x,y}(1)$. Let
\[\partial W_\text{Reg}(x,t) := \{y\in \partial V(x,t): \text{$\gamma_{x,y}$ hits $M_t \cup N_t$ for the first time at $y$ and } \gamma'_{x,y}(1)\notin T(M_t\cup N_t)\}\ , \]
and
\[\partial W_\text{Sing}(x,t) := \partial W(x,t)\setminus \partial W_\text{Reg}(x,t)\ . \]

We now observe the equivalent of Lemma \ref{lem:Regular}
\begin{lemma}
Suppose that $x\in M^n$ and $y\in M^n \cup N^n$ are such that 
\[\min_{y'\in \partial W(x,t)\cap\partial\Omega} Z(x,y',t)=Z(x,y,t)\leq 0\ .\]
Then $y\in \partial W_{\text{Reg}}(x,t)$ or $Z(x,y,t)=0$.
\end{lemma}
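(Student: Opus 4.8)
The plan is to mirror the proof of Lemma~\ref{lem:Regular}, but with all inequalities reversed and with $M_t$ replaced by $M_t\cup N_t$ as the relevant boundary of $\ov\chi$. So I would argue by contradiction: suppose $y\in\partial W_{\text{Sing}}(x,t)$ and $Z(x,y,t)<0$. As before, I first reduce to the case where the geodesic segment from $x$ to $y$ meets no other points of $M_t\cup N_t$ before $y$: if it did, since $Z(x,y,t)<0$ we have $\ip{X(x,t)-X(y,t)}{\nu(x,t)}<0$, and passing to a point $y'$ on the segment strictly between $x$ and $y$ strictly decreases $Z(x,y',t)$ (the sphere shrinks), which already contradicts minimality — so the would-be minimiser cannot lie on such a segment in the first place. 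Having reduced to this case, the definition of $\partial W_{\text{Sing}}(x,t)$ forces $X(x,t)-X(y,t)\in T_{X(y,t)}(M_t\cup N_t)$.

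\textbf{The perturbation argument.} Working inside the chart identifying $W(x,t)$ with a starshaped Euclidean region via $G$, treating $x$ as fixed, the same computation as in \eqref{eq:OK} gives
\[
D^y_{X(x,t)-X(y,t)}Z(x,y,t) = Z(x,y,t) < 0\ .
\]
Now I take a unit-speed geodesic $\widetilde\gamma$ in $\ov\chi$ from $y$ to $x$ and project a short initial arc of it in the normal direction (the normal to whichever of $M_t$ or $N_t$ contains $y$) to produce a smooth curve $a(s)$ in that hypersurface with $a(0)=y$ and $a'(0)$ a negative multiple of the tangential part of $X(x,t)-X(y,t)$; equivalently $a'(0)$ points in the direction that the derivative above increases $Z$. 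By compactness, for small $s>0$ the point $a(s)$ remains in $\partial W(x,t)$ — otherwise either the segment from $x$ to $a(s)$ would meet another point of $M_t\cup N_t$ (contradicting the reduction) or the sign of $\ip{X(a(s),t)-X(x,t)}{\nu(x,t)}$ would be wrong. Then $Z(x,a(s),t)<Z(x,y,t)$ for small $s>0$, contradicting that $y$ realises the minimum. This forces either $y\in\partial W_{\text{Reg}}(x,t)$ or $Z(x,y,t)=0$.

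\textbf{Main obstacle.} The one genuinely new point compared to Lemma~\ref{lem:Regular} is bookkeeping around the fact that the relevant boundary hypersurface is now $M_t\cup N_t$ rather than just $M_t$: I need to make sure $y$ lies on a \emph{single smooth sheet} near which the normal-direction projection is well defined, and that moving along $a(s)$ does not jump from $M_t$ to $N_t$ or hit a corner of $\partial W(x,t)$. Since $N_t$ and $M_t$ are each smooth and, for $\delta$ small in the construction of $N_t$ in Theorem~\ref{thm:noncollapsing}, are disjoint and the geodesic structure of $\ov\chi$ near $y$ sees only one of them, this is handled exactly as in the embedded/collapsed case and requires only the same inverse-function-theorem and compactness inputs. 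The sign conventions — $Z\le 0$, $\inf$ instead of $\sup$, normal pointing into $\ov\chi$ — must be tracked carefully but introduce no new difficulty; this is why the statement and proof are presented as the ``equivalent of Lemma~\ref{lem:Regular}''.
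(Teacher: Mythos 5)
Your proposal is correct and is essentially the paper's own proof: the paper disposes of this lemma by stating that the argument is identical to that of Lemma~\ref{lem:Regular}, which is precisely the sign-reversed rerun (minimum instead of maximum, $Z\le 0$, boundary $M_t\cup N_t$ with $N_t$ smooth by Definition~\ref{def:AlexComp}) that you carry out, including the reduction along the segment and the projected-geodesic perturbation. One small slip to fix: since $D^y_{X(x,t)-X(y,t)}Z=Z<0$, the curve $a(s)$ obtained by projecting the geodesic from $y$ toward $x$ has $a'(0)$ a \emph{positive} multiple of the tangential part of $X(x,t)-X(y,t)$, and it is along this direction that $Z$ strictly decreases; your phrase ``negative multiple \ldots increases $Z$'' is inconsistent with your (correct) construction and conclusion $Z(x,a(s),t)<Z(x,y,t)$.
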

\begin{proof}
This is identical to the proof of Lemma \ref{lem:Regular}.
\end{proof}

\begin{proposition}\label{prop:outerviscosity}
Suppose that $N_t$ satisfies $\ip{\ddt{Y}}{\mu} \geq -H^N $. Then, while $\un{Z}\leq 0$, $\un{Z}$ satisfies
\[\ho \un{Z}\geq |A|^2\un{Z}+2\sum_{\kappa_i>\un{Z}} \frac{\left(\n_i \un{Z}\right)^2}{\kappa_i-\un{Z}}\]
in the viscosity sense.
\end{proposition}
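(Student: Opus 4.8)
The plan is to mirror the proof of Proposition \ref{prop:innerviscosity} essentially line by line, dualising every inequality, since the outer case is the ``mirror image'' of the inner one once we replace $\ov{\Omega}$ by $\ov{\chi}$, the supremum by an infimum, and the inscribed sphere by the exterior sphere. First I would fix $(x_0,t_0)$ and a $C^2$ function $\phi$ touching $\un{Z}$ from \emph{below} near $(x_0,t_0)$ for $t\le t_0$ (the direction of touching flips because $\un{Z}$ is now an infimum and we want a viscosity supersolution). As in the inner case, the infimum defining $\un{Z}(x_0,t_0)$ is either attained in the limit $y\to x_0$ — in which case $\un{Z}(x_0,t_0)=h(v_0,v_0)$ for a unit $v_0$, necessarily $\kappa_{\min}(x_0,t_0)$ — or attained at some genuine $y_0\neq x_0$ with $y_0\in\partial W(x_0,t_0)\cap(\partial\ov\Omega\cup\partial\ov\Psi)$. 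In the first sub-case I invoke the known viscosity \emph{super}solution equation for $\kappa_{\min}$ (the analogue of \cite[Proposition 12.9]{AndrewsChowGuentherLangford}, again requiring no embeddedness), which gives exactly the claimed inequality. If $\un Z=0$ we are at a spatial–temporal maximum of $\phi$ and the inequality is immediate. So the substance is the sub-case $\kappa_{\min}(x_0,t_0)>\un Z(x_0,t_0)=Z(x_0,y_0,t_0)$ with $y_0\neq x_0$.

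In that sub-case I would apply the outer analogue of Lemma \ref{lem:Regular} to get $y_0\in\partial W_{\mathrm{Reg}}(x_0,t_0)$ (using $Z(x_0,y_0,t_0)<0$), and then the inverse function theorem to conclude that there are $\e,\delta>0$ so that $y\in\partial W_{\mathrm{Reg}}(x,t)$ for all $x\in B_\e(x_0)$, $y\in B_\delta(y_0)\cap(M_t\cup N_t)$, $t_0-\e<t\le t_0$; on these open sets $\phi-Z\le 0$ with equality at $(x_0,y_0,t_0)$, so $(x_0,y_0,t_0)$ is a \emph{maximum} of $\phi-Z$ and I may differentiate freely. The spatial first- and second-derivative identities and the time derivative of $Z$ are computed exactly as in Proposition \ref{prop:innerviscosity}; the only genuine new point is the contribution of $\partial^y$-derivatives \emph{when $y_0\in N_t$}: here $\pard{Y}{t}$ replaces $-H(y)\nu(y)$ in the $t$-derivative of $Z$, and the barrier hypothesis $\ip{\pard{Y}{t}}{\mu}\ge -H^N$ is precisely what is needed so that the resulting boundary term has the right sign — this replaces the exact identity $\ip{\ddt X}{\nu}=-H$ used on $M_t$ with an inequality in the favourable direction. (When $y_0\in M_t$ the computation is verbatim the inner one.) Combining, at the maximum point one gets
\[
0\ge -\pard{}{t}(\phi-Z)+g^{ij}\Big(\pard{}{x^i}+\pard{}{y^i}\Big)\Big(\pard{}{x^j}+\pard{}{y^j}\Big)(\phi-Z),
\]
which after the same substitutions yields $\ho\phi\ge|A|^2\phi+(\text{curvature/gradient terms})$.

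Finally I would handle the last term exactly as Andrews--Langford--McCoy do, choosing the parametrisation of $T_xM_t$ and $T_y(M_t\cup N_t)$ so that $w^\perp$-directions are matched and only the $n$-th direction (along the projection of $w$) remains; the identity $\nu(y,t)=\nu(x,t)-Zdw$ (equation \eqref{eq:nuy}) still holds, and with $a=\ip{w}{\nu(x,t)}$ the square-bracket matrix again reduces to $-2(1-a^2)$ times the projection onto $w^\top$, giving a term $-\tfrac{8}{d^2}(Z|w^\top|^2-h(w^\top,w^\top))$. The sign now works the other way: since $\kappa_{\min}>Z=\un Z$, we have $Z|w^\top|^2-h(w^\top,w^\top)<0$, so this term is \emph{positive}, and rewriting it via the first-order identity as $\sum_i (\n_i\phi)^2/(\kappa_i-\phi)$ (summing only over $\kappa_i>\un Z$, the rest contributing nonpositively and hence being safely dropped) produces exactly
\[
\ho\phi\ge|A|^2\phi+2\sum_{\kappa_i>\phi}\frac{(\n_i\phi)^2}{\kappa_i-\phi}
\]
at $(x_0,t_0)$, as required. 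I expect the main obstacle to be bookkeeping the sign reversals consistently throughout — in particular making sure the barrier inequality $\ip{\pard{Y}{t}}{\mu}\ge-H^N$ enters with the correct orientation of $\mu$ relative to $\ov\chi$ — rather than any new analytic difficulty; since the paper states it is ``identical'' in spirit to the inner case, I would likely present it by indicating the dual computation and only spelling out the $N_t$-boundary term in detail.
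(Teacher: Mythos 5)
Your proposal follows essentially the same route as the paper: the same case split (limit $y\to x_0$ handled via the $\kappa_{\min}$ supersolution equation, $y_0\in M_t$ verbatim as in Proposition \ref{prop:innerviscosity} with reversed inequalities, and the genuinely new case $y_0\in N_t$ where the barrier hypothesis $\ip{\ddt{Y}}{\mu}\geq -H^N$ supplies the favourable sign), with $\phi$ touching $\un{Z}$ from below and the same Andrews--Langford--McCoy frame choice turning the square-bracket term into the positive gradient term. The only cosmetic imprecision is that for $y_0\in N_t$ the relation \eqref{eq:nuy} should be read as $\mu(y_0,t_0)=Zdw-\nu(x_0,t_0)$ (so that $H^N$ enters through ${}^N\!h^y_{ij}$ with the opposite sign and combines with the time-derivative term), which is exactly how the paper records it.
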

\begin{proof}
At an arbitrary point $x_0\in M_{t_0}$, we have three possibilities for realising $\un{Z}$:
\begin{enumerate}
\item There is a sequence $y_i\in M_t$ such that $y_i\ra x$ and $\un{Z}(x,t)=\lim_{i\ra \infty} Z(x,y_i,t)$
\item There exists a $y\in (\partial W(x,t)\cap M_t)\setminus\{x\}$ such that $\un{Z}(x,t)=Z(x,y,t)$.
\item There exists a $y\in \partial W(x,t)\cap N_t$ such that $\un{Z}(x,t)=Z(x,y,t)$.
\end{enumerate}
This time, we assume that we have a $C^2$ function $\phi(x,t)$ such that in a neighbourhood of $(x_0,t_0)$, for $t\leq t_0$, $\un{Z}(x,t)\geq\phi(x,t)$ with equality at $(x_0,t_0)$. 

In the first two cases above we may follow exactly the proof of Proposition \ref{prop:innerviscosity} with the inequalities the other way around. Correspondingly, we only need consider the final case. In this case we know that for all $y\in N_t \cap \partial W(x,t)$,
\begin{equation}
Z(x,y,t)\geq\un{Z}(x,t)\geq \phi(x,t)\ .\label{eq:onNt}
\end{equation}
The main difference here is the sign of the normal. This time, $\nu(x_0,t_0)$ points into $\ov{\chi}$ while $\nu(y_0,t_0)$ points out of $\ov{\chi}$. Recalling that $Z$ is negative, we have that the center of the inscribed ball is given by 
\[X(x,t) -Z^{-1}\nu(x_0,t_0) = Y(y_0,t_0)+Z^{-1}\mu(y,t)\]
so
\[\mu(y_0,t_0) = Zdw-\nu(x_0,t_0)\ .\]

Now calculating at $x_0,y_0,t_0$ in local orthonormal coordinates,
\begin{align*}
0=\pard{}{y^i}(\phi-Z) &= \frac{2}{d^2}\ip{\partial_i^y}{\nu(x_0,t_0) - d Z w}\\
0=\pard{}{x^i}(\phi-Z)&= \pard{\phi}{x^i} - \frac 2 d (h^{xp}_i\ip{w}{\partial^x_p}-Z\ip{w}{\partial^x_i})\ .
\end{align*}
Using these identities we have that
\begin{align*}
\frac{\partial^2}{\partial y^i\partial y^j}(\phi-Z)&=\frac 2{d^2}(Z\delta_{ij}+{}^N\!h^y_{ij})
\end{align*}
where the sign change is from the change in normal direction. The other second derivatives remain unchanged:
\begin{align*}
\frac{\partial^2}{\partial x^j\partial y^i}(\phi-Z)&=-\frac{2}{d^2}\left(Z\delta_j^p-h_j^{xp}\right)\ip{\partial^y_i}{\partial^x_p}- \frac 2 d \pard{\phi}{x^j}\ip{w}{\partial^y_i}\\
\frac{\partial^2}{\partial x^i\partial x^j}(\phi-Z)&=\frac{2}{d^2}(Z\delta_{ij}-h^x_{ij}) + Zh^x_{jp}\delta^{pq}h^x_{qi} - \frac 2 d\n_p h^x_{ij}\delta^{pq}\ip{w}{\partial^x_q}-Z^2 h^x_{ij} \\
&\qquad+ \frac 2 d \pard{\phi}{x^j}\ip{w}{\partial^x_i}+ \frac 2 d \pard{\phi}{x^i}\ip{w}{\partial^x_j}+\frac{\partial^2 \phi}{\partial x^i \partial x^j}
\end{align*}
Meanwhile, for the time derivative
\begin{align*}
\pard{}{t}(\phi-Z) &=\pard{\phi}{t}+\frac{2H(x)}{d^2} + \frac{2}{d^2}\ip{\ddt{Y}(y,t)}{\nu(x_0,t_0)-Zdw} - \frac{2}{d}\ip{w}{\n H(x)}-Z^2H(x)\\
&=\pard{\phi}{t}+\frac{2H(x)}{d^2} - \frac{2}{d^2}\ip{\ddt{Y}(y,t)}{\mu(y_0,t_0)} - \frac{2}{d}\ip{w}{\n H(x)}-Z^2H(x)
\end{align*}

As a result, as $(x_0, y_0, t_0)$ is a maximum of $\phi-Z$, we have that
\begin{flalign*}
0&\geq -\pard{}{t} (\phi-Z)+g^{ij}\left(\pard{}{x^i}+\pard{}{y^i}\right)\left(\pard{}{x^j}+\pard{}{y^j}\right)(\phi-Z)\\
&=-\ho \phi +Z|A|^2 +\frac{2}{d^2} \left[\ip{\ddt{Y}(y_0,t_0)}{\mu(y_0,t_0)}+H^N\right]\\
&\qquad + \frac 4 d \delta^{ij}\pard{(\phi-Z)}{x^j}\ip{w}{\partial^x_i-\partial^y_i} \\
&\qquad +\frac{4}{d^2}\left(Z\delta^{jp}-h^{xpj}\right)[\delta_{jp}-\ip{\partial^y_j}{\partial^x_p}+2\ip{w}{\partial^x_j}\ip{w}{\partial^y_p-\partial^x_p}]
\end{flalign*}
As this time we may assume that $Z(x,y,t)<\kappa_{\text{min}}(x,t)$ (where $\kappa_{\text{min}}$ is the smallest principal curvature at $x\in M_t$), we may again obtain a positive gradient term from the final term above. We now check this.
This time we have that for $j=p=n$ and we may choose that 
\begin{align*}
\partial_n^x&=\frac{w-\ip{w}{\nu(x,t)}\nu(x,t)}{\sqrt{1-\ip{w}{\nu(x,t)}^2}}&
\partial_n^y&
=-\frac{(1-2\ip{\nu(x,t)}{w}^2)w+\ip{w}{\nu(x,t)}\nu(x,t)}{\sqrt{1-\ip{w}{\nu(x,t)}^2}}\ .
\end{align*}
Writing $a=\ip{w}{\nu(x,t)}$,
\[\ip{\partial^x_n}{\partial^y_n} = 2a^2-1, \qquad \ip{\partial_n^x}{w}=\sqrt{1-a^2} = -\ip{\partial_n^y}{w}\]
\begin{flalign*}
1&-\ip{\partial^y_n}{\partial^x_n}+2\ip{w}{\partial^x_n}\ip{w}{\partial^y_n-\partial^x_n}=-2(1-a^2)\ .
\end{flalign*}
The result now follows as before as we may see that
\begin{align*}
\frac{4}{d^2}\left(Z\delta^{jp}-h^{xpj}\right)[\delta_{jp}-\ip{\partial^y_j}{\partial^x_p}+2\ip{w}{\partial^x_j}\ip{w}{\partial^y_p-\partial^x_p}]&=\frac{8}{d^2}(h(w^\top,w^\top)-Z|w^\top|^2)\\
&= 2\sum_i\frac{ \left(\n_{e_i} \phi \right)^2}{\kappa_i-\phi} \ .
\end{align*}
\end{proof}
As in Lemma \ref{lem:continuity}, while $\un{Z}\leq 0$, $\un{Z}$ is continuous. We therefore have the following.
\begin{cor}\label{cor:outernoncollapsed}
If $M_0$ is initially Alexandrov immersed and strictly mean convex and $N_t$ is an Alexandrov comparison solution. Then there exists an $\alpha>0$ such that for all time, $M_t$ is outer $\alpha$-noncollapsed.
\end{cor}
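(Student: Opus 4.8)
The plan is to follow the proof of Corollary \ref{cor:innernoncollapsed} essentially line by line, replacing $\ov{Z}$ by $\un{Z}$, suprema/maxima by infima/minima, Proposition \ref{prop:innerviscosity} by Proposition \ref{prop:outerviscosity}, and the interior region $\ov{\Omega}$ by the exterior region $\ov{\chi}_t$ cut out by the given Alexandrov comparison solution $N_t$.

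First I would record the evolution equation $\ho H = |A|^2 H$ and observe that strict mean convexity is preserved, so $H(\cdot,t)\ge \min_{M_0}H>0$ for all $t$; in particular the quotient $\un{Z}/H$ makes sense wherever $\un{Z}$ does, and $\tfrac{2}{H}\n H$ is smooth and bounded on the considered time interval. Combining Proposition \ref{prop:outerviscosity} with this, and discarding the nonnegative gradient term $2\sum_{\kappa_i>\un{Z}}(\n_i\un{Z})^2/(\kappa_i-\un{Z})$ on its right-hand side, we get $\ho\un{Z}\ge |A|^2\un{Z}$ in the viscosity sense while $\un{Z}\le 0$. The standard quotient computation used by Andrews (expanding $\ho(fH)=f\ho H+H\ho f-2\ip{\n f}{\n H}$ with $f=\un{Z}/H$) then shows that, while $\un{Z}\le 0$, the function $\un{Z}/H$ is a viscosity supersolution of the linear parabolic equation $\ho f=\tfrac{2}{H}\ip{\n H}{\n f}$ on $M_t$, which has smooth bounded coefficients and no zeroth-order term. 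By hypothesis $N_0$, hence $\ov{\chi}_0$ and $\un{Z}(\cdot,0)$, are defined, so by compactness of $M_0$ and the continuity of $\un{Z}$ (where $\un{Z}\le 0$) there is an $\alpha>0$ with $\un{Z}(x,0)/H(x,0)\ge -\alpha^{-1}$ for all $x\in M^n$.

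Next I would invoke the standard viscosity/barrier comparison theorems cited in Corollary \ref{cor:innernoncollapsed} (e.g.\ \cite[Lemma 9.8]{AndrewsChowGuentherLangford}, \cite[Theorem 3.3]{CrandallIshiiLions}) to propagate the lower bound $\un{Z}/H\ge -\alpha^{-1}$ to all $t<T$. The one point needing attention is that Proposition \ref{prop:outerviscosity} only supplies the supersolution property in the regime $\un{Z}\le 0$; but where $\un{Z}\ge 0$ the desired inequality $\un{Z}\ge -H/\alpha$ is automatic since $H>0$, so any prospective first point at which the bound is violated lies in $\{\un{Z}<0\}$, where the supersolution property is available and the comparison argument applies — this is exactly the regime bookkeeping already used in Corollary \ref{cor:innernoncollapsed} for the locus $\ov{Z}\ge 0$. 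Finally, by the outer analogue of Lemma \ref{lem:ZtoIntSphere} stated just before Proposition \ref{prop:outerviscosity}, the bound $\un{Z}(x,t)\ge -H(x,t)/\alpha$ at every $x$ is equivalent to the existence of an exterior ball of radius $\alpha/H(x,t)$ in $\ov{\chi}_t$ tangent to $M_t$ at $x$, i.e.\ to $M_t$ being outer $\alpha$-noncollapsed with respect to $N_t$, as claimed.

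I do not expect a genuinely new difficulty: the substance is entirely in the two inputs already established, namely the viscosity differential inequality of Proposition \ref{prop:outerviscosity} and the continuity of $\un{Z}$. The main thing to verify carefully is that $N_t$ remains a legitimate outer barrier for all $t<T$ — this is ensured because $M_t$ stays Alexandrov immersed by Proposition \ref{prop:PresAlexImmersed}, $N_t$ satisfies the barrier condition of Definition \ref{def:AlexComp} by hypothesis, and Lemma \ref{lem:Comparison} keeps $\ov{\chi}_t$ a well-defined region containing the relevant exterior balls throughout — and that the comparison principle is correctly applied across the regime boundary $\un{Z}=0$, as indicated above.
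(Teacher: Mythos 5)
Your proposal is correct and follows essentially the same route as the paper, whose proof of this corollary is simply the remark that it ``follows as in the proof of Corollary \ref{cor:innernoncollapsed}'', i.e.\ combine Proposition \ref{prop:outerviscosity} with the continuity of $\un{Z}$ (noted just before the corollary), the initial bound by compactness, the standard viscosity comparison theorems, and the outer analogue of Lemma \ref{lem:ZtoIntSphere}. Your additional bookkeeping --- discarding the nonnegative gradient term, the quotient computation for $\un{Z}/H$, and noting that any first violation of the bound must occur where $\un{Z}<0$ so the restriction of Proposition \ref{prop:outerviscosity} to the regime $\un{Z}\le 0$ is harmless --- is a faithful elaboration of that same argument.
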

\begin{proof}
This follows as in the proof of Corollary \ref{cor:innernoncollapsed}.
\end{proof}
\section{Applications}
We can now extend existing two-dimensional mean curvature flow with surgery results to the Alexandrov immersed setting. 

We may do this quickly and efficiently as, in Theorem \ref{thm:noncollapsing}, we showed that there was a larger stationary comparison domain $\ov{\Psi}$ with a flat metric such that we may consider $M_t$ as an embedded flow, as in Lemma \ref{lem:Comparison}. Indeed, by mean convexity, writing $\ov{\Omega}_t\subset\ov{\Psi}$ as the moving domain inside $\ov{G}^*M_t$, we have $\ov{\Omega}_t \subset \ov{\Omega}_0$ by Lemma \ref{lem:Comparison}. We define
\begin{equation}r_0:=\sup \left\{ r\in \bb{R}_+: \forall x\in \Omega_0, \ \ov{B}^{\ov{g}}_r(x)\subset \overset{\circ}{\ov{\Psi}}\right\}\ .\label{eq:r0def}\end{equation}
We have that $r_0>0$ depends only on the initial data, and we also have that any $\ov{\Psi}$ ball of radius $r_0$ is contained in $\ov{\Psi}$, may be identified (via $\ov{G}$) as a subset of $\bb{R}^{n+1}$ and $M_t\cap \ov{B}_{r_0}(x)$ is an embedded, noncollapsed, mean convex flow with an interior determined by $\Omega_t$. Therefore, using the terminology of Haslhofer--Kleiner \cite{HaslhoferKleiner}, we locally have an $\alpha$-Andrews flow in a parabolic cylinder about the singular point . As a result, the local regularity theory in \cite{HaslhoferKleiner} also holds here, with the minor addendum that we also need the parabolic cylinders to be sufficiently small depending only on $r_0$.

\begin{theorem}[Gradient estimate for Alexandrov immersed flows]\label{thm:gradest}
Suppose that $M_0$ is compact, Alexandrov immersed and strictly mean convex and let $r_0$ be as in \eqref{eq:r0def}. Then there exists $\rho>0$ and $C_l>0$ depending only on $M_0$ such that: Given any $0<r<r_0$ and any  $p\in M_t$ such that $B_r(p)\subset \ov{\Omega}_{t'}$ for some $t'\in[0,T)$ and $H(p,t)<r^{-1}$, then 
\[\sup_{B_r(p)}|\n^l A|\leq C_l r^{-(l+1)}\]
for all $T-r^2<t\leq T$.
\end{theorem}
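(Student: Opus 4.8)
The plan is to reduce this to the embedded, mean convex, noncollapsed (i.e.\ $\alpha$-Andrews) case treated by Haslhofer--Kleiner \cite{HaslhoferKleiner}, exploiting the fact already established in the preceding discussion that inside the stationary comparison domain $\ov\Psi$ with its flat metric, the pulled-back flow $\ov G^* M_t$ is embedded with interior $\ov\Omega_t\subset\ov\Omega_0$, and that by Theorem \ref{thm:noncollapsing} it is both inner and outer $\alpha$-noncollapsed. First I would fix $\rho := r_0/4$ (or similar), so that every $\ov g$-ball of radius $2\rho$ about a point of $\ov\Omega_0$ is contained in the interior of $\ov\Psi$ and hence isometric, via $\ov G$, to a round ball in $\bb R^{n+1}$. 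On such a ball the flow genuinely looks like an embedded mean curvature flow in Euclidean space, so the hypotheses of the Haslhofer--Kleiner local regularity theorem are met: we have a weak set flow which is an $\alpha$-Andrews flow in the parabolic cylinder $P(p,t,r) = B_r(p)\times(t-r^2,t]$ whenever $B_r(p)\subset\ov\Omega_{t'}$, $r<r_0$, and $r$ is small enough relative to $\rho$.

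Next I would invoke the interior estimates of \cite{HaslhoferKleiner} (their local curvature estimate and its higher-order consequences, obtained by combining the pseudolocality/$\epsilon$-regularity statements with standard parabolic Schauder bootstrapping for $|\n^l A|$). The point $p$ has $H(p,t)<r^{-1}$; combined with the $\alpha$-Andrews condition this forces the parabolic neighbourhood $P(p,t,\theta r)$ for a suitable dimensional $\theta$ to be a smooth graphical piece with $|A|\le C r^{-1}$, and then the derivative estimates $|\n^l A|\le C_l r^{-(l+1)}$ follow on a slightly smaller ball by interior regularity for quasilinear parabolic systems — exactly as in the embedded case, since none of these arguments sees anything beyond the local Euclidean picture. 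The only modification, as flagged in the paragraph preceding the theorem, is that all parabolic cylinders must additionally be taken small compared to $r_0$ so they fit inside the canonically-flat region of $\ov\Psi$; this is why $\rho$ and the constants $C_l$ are allowed to depend on $M_0$ (through $r_0$ and through $\alpha$).

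Two points deserve a little care. First, the statement ranges over all $t$ with $T-r^2<t\le T$ and asserts the bound on $B_r(p)$ for $p\in M_t$; one should check that the hypothesis $B_r(p)\subset\ov\Omega_{t'}$ for \emph{some} $t'$ is enough, using $\ov\Omega_t\subset\ov\Omega_0$ (Lemma \ref{lem:Comparison}) together with the fact that the relevant parabolic cylinder, being short in time, stays inside the flat region; the Andrews condition needed for the Haslhofer--Kleiner machinery is supplied at the base time by Theorem \ref{thm:noncollapsing} and propagates. Second, one should note that the curvature bounds \eqref{eq:Abound}, \eqref{eq:nAbound} are irrelevant to the final constants — they are only used to guarantee smoothness of the flow up to but not including $T$ — so the estimate indeed survives arbitrarily close to the singular time. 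The main obstacle, to the extent there is one, is purely bookkeeping: verifying that the local-in-space identification of $\ov G^*M_t$ with an honest embedded Euclidean mean curvature flow is valid on a fixed-size neighbourhood determined by $r_0$, so that the black-box application of \cite{HaslhoferKleiner} is legitimate; once that is in place the estimate is immediate.
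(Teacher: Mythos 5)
Your proposal is correct and follows essentially the same route as the paper: identify the flow locally (on balls of radius controlled by $r_0$ inside the flat comparison domain $\ov{\Psi}$) with an embedded, mean convex, $\alpha$-Andrews flow in $\bb{R}^{n+1}$ via $\ov{G}$, using Theorem \ref{thm:noncollapsing} and Lemma \ref{lem:Comparison}, and then apply the Haslhofer--Kleiner local regularity/gradient estimates as a black box, with the only modification being that parabolic cylinders must be small relative to $r_0$. Your extra remarks on $\ov{\Omega}_t\subset\ov{\Omega}_0$ and the irrelevance of the a priori curvature bounds to the final constants match the paper's intent.
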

\begin{proof}
As in the above discussion, on $P(p,t,r)$ the flow is an $\alpha$-Andrews flow and the above is just a rewrite of \cite[Theorem 1.8']{HaslhoferKleiner}. 
\end{proof}
\begin{remark}
As in \cite[Remark 2.3]{HaslhoferKleiner} we note that the condition that $B_r(p)\subset \ov{\Omega}_{t'}$ and may be assumed at every flow a blowup sequence after discarding a finite number of terms.
\end{remark}

Next we need the non--collapsedness improvement of Brendle \cite{BrendleInscribedRadius}. 

\begin{theorem}\label{thm:inscrradest}
Suppose that $M_0$ is compact, Alexandrov immersed and strictly mean convex. Then for any $\delta>0$ there exists a $\sigma(\delta), C(\delta)>0$ such that $\ov{Z}\leq (1+\delta)H+CH^{1-\sigma}$.
\end{theorem}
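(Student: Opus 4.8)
The plan is to reduce the statement to Brendle's inscribed radius estimate \cite{BrendleInscribedRadius}, exploiting the observation recorded in the discussion preceding Theorem \ref{thm:gradest}: near any point of large mean curvature the flow is, at the relevant scale $\sim H^{-1}\ll r_0$, an embedded $\alpha$-Andrews flow inside a flat ball of $\ov{\Psi}$, and on that ball $\ov{Z}$ genuinely coincides with the reciprocal inscribed radius of the locally embedded flow. Since the asserted inequality holds trivially (for a suitable $C$) wherever $H$ is bounded, only the large-$H$ region is at issue, and there the geometry is indistinguishable from the embedded case.

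First I would collect the ingredients that Brendle's argument requires and check that each is already available: (i) the viscosity differential inequality $\ho\ov{Z}\leq|A|^2\ov{Z}-2\sum_{\kappa_i<\ov{Z}}(\n_{e_i}\ov{Z})^2/(\ov{Z}-\kappa_i)$, which is exactly Proposition \ref{prop:innerviscosity} (and note $\ov{Z}\geq\kappa_{\max}\geq H/n>0$ along the flow, so the hypothesis $\ov{Z}\geq 0$ is automatic); (ii) the evolution equation $\ho H=|A|^2H$ with $H>0$ preserved, which is standard; (iii) the a priori bound $\ov{Z}\leq\alpha^{-1}H$, which is Corollary \ref{cor:innernoncollapsed}; (iv) the scale-invariant interior derivative estimates $|\n^lA|\lesssim H^{l+1}$ in the high-curvature region, which is Theorem \ref{thm:gradest}; and (v) the Michael--Simon Sobolev inequality on $M_t$, which holds for an arbitrary immersion $X:M^n\to\bb{R}^{n+1}$ and is therefore unaffected by weakening embeddedness to the Alexandrov property.

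With these in hand I would run Brendle's Stampacchia iteration essentially verbatim: setting $u=\ov{Z}-(1+\delta)H$, one obtains from (i)--(ii) a differential inequality for $u$ whose reaction term is controlled by $|A|^2u$ and which carries a favourable negative gradient term; one then estimates the $L^p$ norms over $M^n$ of $(u-\e H^{1-\sigma})_+$, uses the Sobolev inequality (v) together with the gradient term and the bounds (iv) to iterate in $p$, and concludes that $(u-\e H^{1-\sigma})_+\equiv 0$ for a suitable choice of $\sigma(\delta)$ and $C(\delta)$, which is the claim. Because $M^n$ is a fixed compact manifold, all integrals are taken over it and the immersion enters only through the pointwise quantities $A$, $H$, $\ov{Z}$, so no new terms are produced.

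The main obstacle is not the iteration but justifying that the non-smooth function $\ov{Z}$ --- here defined as a supremum of $Z(x,\cdot,t)$ over the visible set $\partial V(x,t)\cap\partial\ov{\Omega}$ rather than over all of $M_t$ --- may legitimately be fed into these integral estimates. This is handled as in \cite{AndrewsLangfordMcCoy} and \cite{BrendleInscribedRadius}: off the set where $\ov{Z}=\kappa_{\max}$, Lemma \ref{lem:Regular} shows the supremum is attained at a point $y\in\partial V_{\on{Reg}}(x,t)$, so locally $\ov{Z}$ is the restriction to the diagonal of a smooth function and the computation of Proposition \ref{prop:innerviscosity} produces the distributional inequality needed for Stampacchia iteration; on the complementary set one compares with the smooth subsolution $\kappa_{\max}$. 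One further checks, using that the relevant scale is $\ll r_0$, that the maximising point $y$ lies in the locally embedded flat ball, so that the global structure of the immersion never enters the estimate. Once this bookkeeping is in place the argument of \cite{BrendleInscribedRadius} applies without further modification.
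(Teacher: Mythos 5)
Your proposal is correct and takes essentially the same route as the paper: reduce to Brendle's Stampacchia iteration \cite{BrendleInscribedRadius}, with Proposition \ref{prop:innerviscosity} supplying the viscosity inequality for $\ov{Z}$ and the remaining ingredients noted to be insensitive to replacing embeddedness by the Alexandrov property. The only bookkeeping difference is that the paper's checklist records the Lipschitz continuity/semi-convexity of $\ov{Z}$ and the Huisken--Sinestrari convexity estimates \cite{HuiskenSinestrariConvexity} (which hold for immersed flows and are what Brendle's auxiliary inequality actually requires), whereas you instead invoke the gradient estimate of Theorem \ref{thm:gradest}, which the iteration does not need; this does not affect correctness.
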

We will follow the original proof of Brendle \cite{BrendleInscribedRadius} and but we note that for the above statement alone we could also simply apply the elegant proof of Haslhofer and Kleiner \cite{HaslhoferKleinerShort} (as in the proof of Theorem \ref{thm:gradest}) to get the same statement. However, in Theorem \ref{thm:surgery} we need a proof which can be easily modified to flows with surgery (see also \cite[Comment before section 2]{BrendleInscribedRadius}), and so we briefly describe the integral estimate proof.
\begin{proof}
By an identical proof to in \cite[Proposition 2.1]{BrendleInscribedRadius} we see that $\ov{Z}$ is also Lipschitz continuous and semi-convex. We have already demonstrated \cite[Proposition 2.3]{BrendleInscribedRadius} in Proposition \ref{prop:innerviscosity} and so \cite[Corollary 6]{BrendleInscribedRadius} also holds. The auxiliary equation of \cite[Proposition 3.1, Corollary 3.2]{BrendleInscribedRadius} now follow in an identical manner (where we note that Huisken and Sinestrari's convexity estimates \cite{HuiskenSinestrariConvexity} hold for immersed surfaces, so we may still apply these estimates). The proof now follows entirely analogously via Stampacchia iteration.
\end{proof}

As mentioned previously for $n\geq 3$ Huisken--Sinestrari \cite{HuiskenSinestrariConvexity} demonstrated surgery techniques which are applicable to MCF. We now sketch the minor adjustments to Brendle and Huisken's MCF of surfaces with surgery.

\begin{theorem}[Alexandrov immersed MCF with surgery]\label{thm:surgery}
There exists a notion of Alexandrov immersed mean curvature flow with surgery as in \cite{BrendleHuisken}.
\end{theorem}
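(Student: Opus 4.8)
The plan is to verify that every ingredient of the Brendle--Huisken surgery construction \cite{BrendleHuisken} which is used in the embedded mean convex case has an Alexandrov immersed counterpart, and that the surgery procedure (which is a local cut-and-paste operation on necks) respects the Alexandrov immersed structure. First I would recall the structure of the Brendle--Huisken argument: one runs the smooth flow until a singularity is imminent, uses the noncollapsing, gradient and inscribed radius estimates to obtain a canonical neighbourhood / neck detection theorem, then performs surgery by replacing $\varepsilon$-necks with standard caps, and finally shows that after surgery the estimates are preserved with controlled constants so that the process continues. The key observation, already exploited in Theorems \ref{thm:gradest} and \ref{thm:inscrradest}, is that via the stationary comparison domain $\ov{\Psi}$ of Theorem \ref{thm:noncollapsing} with its flat metric, $M_t$ is \emph{locally} an embedded $\alpha$-Andrews flow in any $\ov{g}$-ball of radius $r_0$ about a point of $\ov{\Omega}_t$; since surgery is a local procedure taking place on necks of curvature scale $\gg r_0^{-1}$ near the singular time, all the local arguments of \cite{BrendleHuisken} (the gradient estimate, the convexity estimate of \cite{HuiskenSinestrariConvexity} which already holds for immersed surfaces, the inscribed radius estimate of Theorem \ref{thm:inscrradest}, the neck detection lemma, and the pseudolocality-type arguments) go through verbatim inside such balls.

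The main steps, in order, would be: (i) state the definition of Alexandrov immersed MCF with surgery, mirroring \cite{BrendleHuisken} but carrying along the auxiliary data $(\ov{\Omega},G)$ and the comparison domain $\ov{\Psi}$, and observe that a standard surgery on an $\varepsilon$-neck, being supported in a ball of radius $\ll r_0$ where the flow is embedded, produces a new immersion which is again Alexandrov immersed (one simply performs the corresponding surgery on $\ov{\Omega}$, capping off the neck region of the solid domain; since the neck region sits inside a chart identified with a subset of $\bb{R}^{n+1}$ by $\ov{G}$, the extension $G$ of the new boundary immersion is immediate and $\on{inj}$ only improves locally); (ii) check that the a priori estimates --- noncollapsing (Theorem \ref{thm:noncollapsing}), the gradient estimate (Theorem \ref{thm:gradest}), and the inscribed radius estimate (Theorem \ref{thm:inscrradest}) --- survive a surgery with constants depending only on the surgery parameters, exactly as in \cite{BrendleHuisken}, using that near the surgery region the flow is an embedded $\alpha$-Andrews flow; (iii) verify that Proposition \ref{prop:PresAlexImmersed} continues to apply on each smooth piece of the flow-with-surgery, so that the auxiliary domain persists between surgery times; (iv) conclude that the Brendle--Huisken termination argument (finitely many surgeries, components becoming extinct or being discarded as convex/neck-pinched) is unaffected, since it only uses the estimates above together with the area and genus monotonicity, and area is computed on $M_t=\partial\ov{\Omega}_t$ just as in the embedded case.

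I expect the main obstacle to be step (i): one must be careful that the surgery, which in the embedded setting both modifies the hypersurface and the region it bounds, is well-defined for the \emph{immersion} $G:\ov{\Omega}\to\bb{R}^{n+1}$ rather than an embedding. The point to nail down is that at the time surgery is performed the relevant $\varepsilon$-neck $\mathcal{N}\subset M_t$ is visible within a single canonical chart (its curvature scale is comparable to the singularity scale, hence far smaller than $r_0$, and by the noncollapsing and Lemma \ref{lem:noselfint} the corresponding solid neck in $\ov{\Omega}_t$ is embedded by $G$ into a Euclidean ball), so the cut-and-paste can be carried out in that chart on both $\partial\ov{\Omega}$ and on a collar inside $\ov{\Omega}$, and then glued back; the resulting manifold-with-boundary admits the obvious immersion and the resulting $X$ is again Alexandrov immersed, with $\on{inj}$ bounded below near the surgery by the neck scale. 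A secondary, more bookkeeping-type obstacle is to confirm that the ``grim reaper $\times\,\bb{R}$'' model is excluded for the noncollapsing quantities $\ov{Z},\un{Z}$ defined here exactly as in \cite{BrendleHuisken} --- but this is immediate from Theorem \ref{thm:noncollapsing} and the strict inequality $\ov{Z}\leq(1+\delta)H+CH^{1-\sigma}$ of Theorem \ref{thm:inscrradest}, which rules out a flat direction. Once these points are checked, the theorem follows by invoking \cite{BrendleHuisken} essentially line by line, with every estimate replaced by its Alexandrov immersed version proved above.
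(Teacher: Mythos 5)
Your overall strategy is the same as the paper's: use the stationary comparison domain $\ov{\Psi}$ and the radius $r_0$ of \eqref{eq:r0def} to view $M_t$ as a locally embedded $\alpha$-Andrews flow, note that surgery is a local operation on necks contained in Euclidean-embeddable balls (so the Alexandrov immersed structure is preserved by capping the solid neck in a chart), and then run the Brendle--Huisken machinery with each auxiliary estimate replaced by its Alexandrov immersed version. However, the step by which you localize the surgery is not correct as stated, and it is exactly the one new quantitative ingredient in the paper's argument. In \cite{BrendleHuisken} surgeries are \emph{not} performed at the singularity scale: they are performed on necks where $H\approx H_1$, of rescaled length at most $L_0$, so the surgery region has a fixed physical size of order $L_0\Theta/H_1$ which has no a priori relation to $r_0$. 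Your assertion that ``its curvature scale is comparable to the singularity scale, hence far smaller than $r_0$'' therefore does not hold; the containment of every surgery neck (and of the cylinders appearing in the neck detection and continuation arguments) in a ball of radius $r_0$ must be \emph{arranged}, and the paper does this by additionally stipulating $H_1>10^6 L_0\Theta/r_0$, which is legitimate only because the Brendle--Huisken parameter $H_1$ may be chosen arbitrarily large. Without this explicit parameter condition, your claim that all local arguments go through ``verbatim inside such balls'' has nothing to apply to.

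Two further points that your sketch glosses over but that do require modification: the outward-minimising property used in \cite[Theorem 2.3]{BrendleHuisken} must be reformulated with variations taken inside the stationary comparison domain $\ov{\Psi}$ rather than in $\bb{R}^{n+1}$ (the paper invokes Head's argument for this, and checks it is preserved under surgeries); and in the Neck Continuation Lemma the application of \cite[Proposition 2.18]{BrendleHuisken} genuinely needs an \emph{embedded} cylinder, which is obtained by using only a short piece after rescaling by $H=2H_1/\Theta$ so that the relevant cylinder again fits in an $r_0$-ball. Your secondary worry about excluding the grim reaper $\times\,\bb{R}$ is fine and is indeed handled by the noncollapsing of Theorem \ref{thm:noncollapsing}. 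So the architecture of your proof matches the paper, but you should replace the ``singularity scale'' justification by the explicit largeness condition on $H_1$ relative to $L_0\Theta/r_0$, and record the two auxiliary modifications above.
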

\begin{proof}[Proof sketch]
We recall that in \cite{BrendleHuisken} surgery occurs when the maximum mean curvature hits $H_3>H_2>>H_1$, while the surgery happens at a scale of approximately $H_1$ on a neck of length $L<L_0$ (after rescaling by mean curvature). The choices of the values of $H_1, H_2, H_3$ are the final parameter choices and $H_1$ may be made arbitrarily large (see \cite[bottom of page 624]{BrendleHuisken}). We additionally stipulate that 
\[H_1>\frac{10^6L_0\Theta}{r_0}\ ,\]
which is enough to ensure that any neck we will want to apply surgery to will be strictly contained in a ball of radius $r_0$, and therefore may be viewed as embedded in $\bb{R}^{n+1}$ locally. We may therefore apply Brendle--Huisken's surgeries, as described in \cite[Section 6]{BrendleHuisken}. Furthermore, we will see below that in all proofs where embeddedness methods are used, we can replace this with local embeddedness via the assumption on $H_1$.

Next, we observe that all auxiliary results in \cite[Section 2]{BrendleHuisken} still hold for an Alexandrov immersed flow given the above choice of $H_1$, using local embeddedness: 
\begin{itemize}
\item The pseudolocality result \cite[Theorem 2.2]{BrendleHuisken} holds for immersed surfaces.
\item The modified outward-minimising gradient estimate, \cite[Theorem 2.3]{BrendleHuisken} will only be applied on (rescalings of) balls of radius strictly less than $r_0$, and so we may simply apply this theorem unchanged. After replacing variations in $\bb{R}^{n+1}$ with variations in the stationary outer comparison domain $\ov{\Psi}$, the work of Head \cite[Lemma 5.2]{Head2Convex} still indicates that the outward minimising property holds and is preserved regardless of surgeries (in modifying this, we are using that necks are in Euclidean embeddable balls).
\item Properties of surgery \cite[Theorem 2.5]{BrendleHuisken} may still be applied unchanged, due to the above considerations.
\item The proofs of Propositions 2.7, 2.8 in \cite{BrendleHuisken} still hold by local embeddedness due to our choice of $H_1$. 
\item \cite[Proposition 2.9]{BrendleHuisken} then follows from Propositions 2.7, 2.8.
\item \cite[Propositions 2.10 - 2.12]{BrendleHuisken} do not need to be modified at all.
\item The inscribed radius estimate \cite[Proposition 2.13]{BrendleHuisken} is a modified version of Brendle's Stampaccia inscribed radius estimate. The exact same modification applied to Theorem \ref{thm:inscrradest} yields the same result.
\item The proofs of Neck Detection Lemmas \cite[Theorems 2.14, 2.15]{BrendleHuisken} involves a contradiction argument blowing up flows on small balls. By our choice of $H_1$ the flow is embedded on the balls and the proof still goes through.
\item The proof of Proposition 2.16 still goes through due to our choice of $H_1$ (note that in this proof, due to the remark after the statement of the pointwise derivative estimate \cite[Proposition 2.9]{BrendleHuisken}, $H(p_1,t_1)$ must be close to $H_1$ and so we are again in a region which we may view to be embedded in $\bb{R}^{n+1}$).
\item \cite[Proposition 2.17]{BrendleHuisken} holds for immersed surfaces with a gradient estimate.
\item  \cite[Proposition 2.18]{BrendleHuisken} is only required in the embedded setting, see below.
\end{itemize}

As a result of this, we now see that the proof of \cite[Proposition 3.1]{BrendleHuisken}(which is used to find an initial neck given sufficiently large curvature) still holds by applying previously verified auxiliary statements. Finally, almost all of the proof of the Neck Continuation Lemma \cite[Proposition 3.2]{BrendleHuisken} immediately follows, until the final contradiction to get a convex cap. Here we need to be careful in the application of \cite[Proposition 2.18]{BrendleHuisken} (which requires an embedded cylinder). However, after rescaling by $H=\frac{2H_1}{\T}$ we only need a short piece of embedded cylinder to obtain the contradiction, and so, by choosing a rescaled length shorter than $L$, the original cylinder was contained in a ball of radius $r_0$, and so may be viewed as embedded in $\bb{R}^{n+1}$. Applying \cite[Proposition 2.18]{BrendleHuisken} the contradiction argument goes through and the Neck Continuation Lemma still holds.

As a result, Brendle and Huisken's surgery procedure goes through as claimed.
\end{proof}
\begin{remark}
The authors expect that it is also possible to modify the alternative surgery procedure of Haslhofer and Kleiner \cite{HaslhoferKleinerSurgery} to the Alexandrov immersed setting.
\end{remark}

\bibliographystyle{plain}
\bibliography{LitMCF.bib} 

\end{document}